\DeclareSymbolFont{cyrletters}{OT2}{wncyr}{m}{n}
\DeclareMathSymbol{\Sha}{\mathalpha}{cyrletters}{"58}
\begin{document}
	\title{Introduction to Mordell Weil Theorem}
	\author{Shenghao Li}
	\date{August 9, 2019}
	\maketitle
	\begin{abstract}
		\small This is an article about Mordell Weil Theorem. Mordell-Weil Theorem is one of the greatest theorems about ellpitic curve. In this article, I will introduce the proof of Mordell-Weil theorem and some simple ways to compute the torsion part of the group.
	\end{abstract}
    \section{Basic Properties of Elliptic Curves}
    In this part, we will introduce some basic notations and properties about elliptic curves. Some classic results won't be proved, but I will list some books where you can look them up.
    
    \newtheorem{defn}{Definition}[section]
    \begin{defn}
    	An elliptic curve is a pair (E,O), where E is a nonsingular curve of genus 1 and $O\in E$. The elliptic curve E is defined over K, written E/K, if E is defined over K(a field) as a curve and $O\in E(K)$.
    \end{defn}
    The definition above is not very clear and it's hard for us to study them. So, by Riemann-Roch theorem, we have the following equivalent definition:
    \begin{defn}
    	An elliptic curve over K can be defined as a nonsingular projective plane curve over K of the form 
    	$$Y^2Z+a_1XYZ+a_3YZ^2=X^3+a_2X^2Z+a_4XZ^2+a_6Z^3$$
    	Here O=[0,1,0] is ths basepoint.\\
    	This is called the Weierstrass equation of an elliptic curve.
    \end{defn}
    If we let $Z=0$, then we find that the point must be O. So we can assume that $Z\neq 0$, and we can get the following equation
    $$E:y^2+a_1xy+a_3y=x^3+a_2x^2+a_4x+a_6$$
    If $char(\bar{K})\neq 0$, then we can simplify the equation by completing the square. Thus replacing y by $\frac{y-a_1x-a_3}{2}$ gives an equation of the form
    $$E:y^2=4x^3+b_2x^2+2b_4x+b_6$$
    where
    \begin{align*}
    	b_2&=a_1^2+4a^2\\
    	b_4&=2a_4+a_1a_3\\
    	b_6&=a_3^2+4a_6
    \end{align*}
    We also define quantities
    \begin{align*}
    	b_8&=a_1^2a_6+4a_2a_6-a_1a_3a_4+a_2a_3^2-a_4^2\\
    	c_4&=b_2^2-24b_4\\
    	c_6&=b_2^3+36b_2b_4-216b_6\\
    	\Delta&=-b_2^2b_8-8b_4^3-27b_6^2+9b_2b_4b_6
    \end{align*}
    If further char($\bar{K}$)$\neq$ 2,3, then replacing (x,y) by $((x-3b_2)/36, y/216)$ we can get a simpler equation
    $$E:y^2=x^3-27c_4x-54c_6$$
    Thus we often use the equation $E:y^2=x^3+Ax+B$ to denote an elliptic curve, and such curves are nonsingular if and only if $\Delta\neq 0$(i.e. $-16(4A^3+27B^2)\neq 0$).
    
    Next we will intoduce one of the most important structures on elliptic curves, the group law.
    \begin{defn}
    	Let $P,Q\in E$, L the line connecting P and Q(tangent line if $P=Q$). According to the Bezout theorem, the line L and E must intersect on the third point R(may be the same as P, Q).Let L' be the line connecting R and O. Then $P\oplus Q$ is the point that such that L' intersects E at $R,O$, and $P\oplus Q$.
    \end{defn}
    Now we justify the use of the symbol $\oplus$.
    \newtheorem{pro}{Proposition}[section]
    \begin{pro}
    	The composition law($\oplus$) satifies the following properties.\\
    	(a) If a line L intersects E at the points P,Q and R, then 
    	$$(P\oplus Q)\oplus R=O$$
    	(b) $P\oplus O=P$ for all $P\in E$\\
    	(c) $P\oplus Q=Q\oplus P$ for all $P,Q\in E$\\
    	(d) Let $P\in E$. There is a point of E, denoted -P, so that
    	$$P\oplus (-P)=O$$
    	(e) Let $P,Q,R\in E$. Then 
    	$$(P\oplus Q)\oplus R=P\oplus(Q\oplus R)$$
    	In other words, the composition law makes E into an abelian group with identity element O. We further have:\\
    	(f) Suppose E is defined over K. Then
    	$$E(K)=\left\{(x,y)\in K^2:y^2+a_1xy+a_3y=x^3+a_2x^2+a_4x+a_6\right\}\cup \left\{O\right\}$$
    	is a subgroup of E.
    \end{pro}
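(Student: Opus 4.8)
The plan is to read off parts (b), (c), (d) directly from the geometric construction, derive (a) from them, prove associativity (e) by comparing $\oplus$ with the group law on the divisor class group, and obtain (f) from Galois invariance. The one preliminary observation I would make is that in the Weierstrass model the line at infinity $Z=0$ meets $E$ only at $O=[0,1,0]$, with intersection multiplicity $3$; thus $O$ is a flex and the tangent line at $O$ meets $E$ at $O$ three times. This keeps the degenerate cases in the definition of $\oplus$ under control.

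Commutativity (c) is immediate: the line joining $P$ and $Q$ is the line joining $Q$ and $P$. For (b), the line $L$ through $P$ and $O$ meets $E$ at a third point $R$, and the auxiliary line $L'$ of the construction is again the line through $R$ and $O$, hence $L'=L$, whose third intersection with $E$ is $P$; so $P\oplus O=P$. For (d), let $-P$ be the third intersection of the line through $P$ and $O$ with $E$; then the line through $P$ and $-P$ passes through $O$, and the line through $O$ and $O$ is the flex tangent, whose third intersection is $O$, giving $P\oplus(-P)=O$. For (a), if a line meets $E$ at $P,Q,R$, then by definition $P\oplus Q$ is the third intersection of the line through $R$ and $O$, which is exactly $-R$, so $(P\oplus Q)\oplus R=(-R)\oplus R=O$.

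The substance of the proposition, and where I expect essentially all the difficulty to lie, is associativity (e): the geometric recipe for $\oplus$ is not visibly associative. The plan is to use Riemann--Roch (already invoked to produce the Weierstrass form) to show that $\kappa\colon E\to\mathrm{Pic}^0(E)$, $P\mapsto[(P)-(O)]$, is a bijection --- surjective because every degree-$0$ class on a genus-$1$ curve is represented by some $(P)-(O)$, injective because $(P)\sim(Q)$ on a curve of positive genus forces $P=Q$. The key computation is that three collinear points $P,Q,R$, cut out by a linear form $\ell$, satisfy $\mathrm{div}(\ell/Z)=(P)+(Q)+(R)-3(O)$, hence $(P)+(Q)+(R)\sim3(O)$; combined with the definition of $\oplus$ and the flex property this yields $\kappa(P\oplus Q)=\kappa(P)+\kappa(Q)$. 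So $\kappa$ transports $\oplus$ to addition of divisor classes, which is plainly associative and abelian, and (e) follows. (An alternative, entirely geometric proof runs through the Cayley--Bacharach ``$8\Rightarrow9$'' theorem for plane cubics applied to two products of three lines; there the obstacle is the case analysis for coincident or collinear inputs, which one can absorb by a Zariski-density argument since both sides are rational functions of the coordinates.)

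For (f), the absolute Galois group $\mathrm{Gal}(\bar K/K)$ acts on $E(\bar K)$ fixing $O$, and because $\oplus$ and negation are built from lines through points and intersections with $E$ --- operations with coefficients in the prime field --- one has $\sigma(P\oplus Q)=\sigma(P)\oplus\sigma(Q)$ and $\sigma(-P)=-\sigma(P)$ for all $\sigma$. Hence $E(K)=E(\bar K)^{\mathrm{Gal}(\bar K/K)}$ is stable under the group operations and contains $O$, so it is a subgroup. Concretely, the third intersection of a $K$-rational line with $E$ is $K$-rational, being the remaining root of a cubic over $K$ two of whose roots already lie in $K$.
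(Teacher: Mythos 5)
Your proposal is correct and follows essentially the same route as the paper: parts (a)--(d) and (f) are handled by direct inspection of the geometric construction, and associativity is obtained by showing $\kappa\colon P\mapsto [(P)-(O)]$ is a bijection onto $\mathrm{Pic}^0(E)$ via Riemann--Roch, with the key identity $\mathrm{div}(\ell/Z)=(P)+(Q)+(R)-3(O)$ transporting $\oplus$ to addition of divisor classes (this is exactly the content of the paper's Proposition 1.2). If anything, you supply more detail than the paper does on the ``trivial'' parts (a)--(d) and (f), which the paper leaves unproved.
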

    \begin{proof}
	    Only (e) is not trivial. One can laboriously verify the associative law case by case by checking the equations. However, we will use the Riemann Roch theorem to prove that, and also use a bit of divisors.(For definition of divisors, one can read [Sil], p.31)
    \end{proof}
    \begin{pro}
    	Let (E,O) be an elliptic curve.\\
    	(a) For every divisor $D\in Div^0(E)$ there exists a unique point $P\in E$ so that
    	$$D\sim (P)-(O)$$
    	Let
    	$$\sigma:Div^0(E)\rightarrow E$$
    	be the map given by this association.
    	(b) The map $\sigma$ is surjective.
    	(c) Let $D_1,D_2\in Div^0(E)$. Then 
    	$$\sigma(D_1)=\sigma(D_2)\qquad iff\quad D_1\sim D_2$$
    	Thus $\sigma$ induces a bijection of sets(which we also denote by $\sigma$)
    	$$\sigma:Pic^0(E)\rightarrow E$$
    	(d) The inverse to $\sigma$ is the map
    	$$\kappa:E\rightarrow Pic^0(E)$$
    	$$P\rightarrow class\ of\ (P)-(O)$$
    	(e) If E is given by a Weierstrass equation, then the composition law we mentioned above and the group law from $Pic^0(E)$ by using $\sigma$ are the same. Thus, the composition law satisfied the associative law.
    \end{pro}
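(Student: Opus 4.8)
The plan is to obtain parts (a)--(d) as essentially formal consequences of the Riemann--Roch theorem on a curve of genus $1$, and then to deduce part (e) from a single divisor identity for lines in $\mathbb{P}^2$.

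For (a), take $D \in \mathrm{Div}^0(E)$ and look at the degree-$1$ divisor $D + (O)$. Riemann--Roch on the genus-$1$ curve $E$ shows that the space of rational functions $f$ with $\mathrm{div}(f) + D + (O) \ge 0$ is one-dimensional: the complementary term in Riemann--Roch, which involves a divisor of negative degree, vanishes. Picking a nonzero such $f$, the divisor $\mathrm{div}(f) + D + (O)$ is effective of degree $1$, hence equals $(P)$ for a unique point $P \in E$, so $D \sim (P) - (O)$. Uniqueness of $P$ rests on the standard fact that on a curve of genus $\ge 1$ two distinct points are never linearly equivalent — a function with divisor $(P) - (P')$ would be a degree-$1$ map to $\mathbb{P}^1$, forcing genus $0$ — so $(P) - (O) \sim (P') - (O)$ implies $P = P'$. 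This defines $\sigma$, and (b) is then immediate since $\sigma\big((P) - (O)\big) = P$. For (c): if $\sigma(D_1) = \sigma(D_2) = P$ then $D_1 \sim (P) - (O) \sim D_2$, and conversely $D_1 \sim D_2$ gives $(P_1) \sim (P_2)$, hence $P_1 = P_2$ by the same non-equivalence fact; so $\sigma$ descends to a bijection on $\mathrm{Pic}^0(E)$. Part (d) is then a direct check that this induced $\sigma$ and $\kappa$ are mutually inverse straight from their definitions.

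The substance is in (e). First I would prove the key lemma: if a line $L \subset \mathbb{P}^2$ meets $E$ in the points $P_1, P_2, P_3$ counted with multiplicity, then $(P_1) + (P_2) + (P_3) \sim 3(O)$. Indeed, in the Weierstrass model the rational function $L/Z$ on $E$ (ratio of the linear form cutting out $L$ to the linear form $Z$) has divisor $(P_1) + (P_2) + (P_3) - 3(O)$, since $Z = 0$ is the line at infinity, meeting $E$ only at $O$ and with multiplicity $3$; being principal, this divisor is $\sim 0$. Now unwind the chord-and-tangent definition of $P \oplus Q$. Letting $L$ be the line through $P$ and $Q$ meeting $E$ again at $R$, the lemma gives $(P) + (Q) + (R) \sim 3(O)$; letting $L'$ be the line through $R$ and $O$ meeting $E$ again at $P \oplus Q$, it gives $(R) + (O) + (P \oplus Q) \sim 3(O)$. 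Subtracting, $(P \oplus Q) - (O) \sim \big((P) - (O)\big) + \big((Q) - (O)\big)$, i.e. $\kappa(P \oplus Q) = \kappa(P) + \kappa(Q)$ in $\mathrm{Pic}^0(E)$. Since $\kappa$ is a bijection by (d), the composition law $\oplus$ is exactly the transport through $\kappa$ of the addition on $\mathrm{Pic}^0(E)$; in particular $\oplus$ is associative, $O$ is the identity (as $\kappa(O) = 0$), and $-P$ corresponds to the class of $(O) - (P)$, which recovers parts (b)--(e) of Proposition 1.1.

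I expect the only genuine obstacle to be the bookkeeping inside the key lemma: one must confirm that $L/Z$ really has the stated divisor in every degenerate configuration — a tangent line (when $P = Q$, so $P$ or $R$ occurs with multiplicity), a ``vertical'' line $x = c$ that passes through $O$, and the flex case where all three intersection points coincide — which comes down to matching the geometric intersection multiplicity of $L$ with $E$ at each point to the order of vanishing of $L/Z$ there. Everything else is formal, once one grants the genus-$1$ input that distinct points are never linearly equivalent.
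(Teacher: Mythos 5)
Your proposal is correct and follows essentially the same route as the paper: Riemann--Roch on $D+(O)$ for existence in (a), the fact that distinct points on a genus-$1$ curve are never linearly equivalent for uniqueness (you phrase it via the degree-$1$ map to $\mathbb{P}^1$, the paper via $\dim\mathcal{L}((P'))=1$, which amounts to the same thing), and for (e) the divisor computation $\mathrm{div}(L/Z)=(P_1)+(P_2)+(P_3)-3(O)$ for the two lines, subtracted to give $\kappa(P\oplus Q)=\kappa(P)+\kappa(Q)$. Your closing caveat about checking the multiplicities in degenerate configurations is the same bookkeeping the paper also leaves implicit.
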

    \begin{proof}
    	(a) Since E has genus 1, the Riemann-Roch theorem says that 
    	$$dim\mathcal{L}(D+(O))=1$$
    	Let $f\in \bar{K}(E)$ be a generator for $\mathcal{L}(D+(O))$. Since
    	$$div(f)\geq -D-(O)\quad and\quad deg(div(f))=0$$
    	it follows that
    	$$div(f)=-D-(O)+(P)$$
    	for some $P\in E$. Hence
    	$$D\sim (P)-(O)$$
    	To prove that P is unique, we assume that there are two points P and P' both satisfy the condition. Then we get that $P\sim P'$. So there exists $f\in \bar{K}(E)$ so that 
    	$$div(f)=(P)-(P')$$
    	Then $f\in \mathcal{L}((P'))$, and by the Riemann-Roch theorem we have $dim\mathcal{L}((P'))=1$. However we already know that the constant function is in $\mathcal{L}((P'))$, so we can get f is a constant function. Thus $P=P'$. Hence P is unique.\\
    	(b) For any $P\in E$
    	$$\sigma((P)-(O))=P$$
    	(c) Suppose $\sigma(D_1)=P_1$, $\sigma(D_2)=P_2$. Then we can get $(P_1)-(P_2)\sim D_1-D_2$. Thus $\sigma(D_1)=\sigma(D_2)$ we can imply that $D_1\sim D_2$. Also if $D_1\sim D_2$, we have $P_1\sim P_2$, so $P_1=P_2$.\\
    	(d) Directly from (b) and (c).\\
    	(e) Let E be given by a Weierstrass equation, and let $P,Q\in E$. It clearly suffices to show that 
    	$$\kappa(P+Q)=\kappa(P)+\kappa(Q)$$
    	Let 
    	$$f(X,Y,Z)=aX+bY+cZ=0$$
    	give the line L in $\mathbf{P}^2$ going through P and Q, let R be the third point of intersection of L with E, and let
    	$$f'(X,Y,Z)=a'X+b'Y+c'Z=0$$
    	be the line L in $\mathbf{P}^2$ through R and O. Then from the definition of addition on E and the fact that $Z=0$ intersects E at O with multiplicity 3, we have
    	$$div(f/Z)=(P)+(Q)+(R)-3(O)$$
    	and
    	$$div(f'/Z)=(P+Q)+(R)-2(O)$$
    	Thus
    	$$(P+Q)-(P)-(Q)+(O)=div(f'/f)$$
    	Hence
    	$$\kappa(P+Q)=\kappa(P)+\kappa(Q)$$
    \end{proof}
    \newtheorem{200}{Remark}
    \begin{200}
    	Here we will directly write out the equation of the composition law.
    	
    	Let E be an elliptic curve given by a Weierstrass equation 
    	$$E:y^2+a_1xy+a_3y=x^3+a_2x^2+a_4x+a_6$$
    	(a) Let $P_0=(x_0,y_0)\in E$. Then
    	$$-P_0=(x_0,-y_0-a_1x_0-a_3)$$
    	Now let $P_1+P_2=P_3\quad with\quad P_i=(x_i,y_i)\in E$
    	(b) If $x_1=x_2$ and $y_1+y_2+a_1x_2+a_3=0$, then
    	$$P_1+P_2=O$$
    	Otherwise, let
    	$$\lambda=\frac{y_2-y_1}{x_2-x_1},\qquad \nu=\frac{y_1x_2-y_2x_1}{x_2-x_1}\qquad if\ x_1\neq x_2$$
    	$$\lambda=\frac{3x_1^2+2a_2x_1+a_4-a_1y_1}{2y_1+a_1x_1+a_3},\qquad \nu=\frac{-x_1^3+a_4x_1+2a_6-a_3y_1}{2y_1+a_1x_1+a_3}\qquad if\ x_1= x_2$$
    	(c) $P_3=P_2+P_1$ is given by
    	$$x_3=\lambda^2+a_1\lambda-a_2-x_1-x_2$$
    	$$y_3=-(\lambda+a_1)x_3-\nu-a_3$$
    	(d) As special cases of (c), we have for $P_1\neq \pm P_2$
    	$$x(P_1+P_2)=\frac{y_2-y_1}{x_2-x_1}^2+a_1\frac{y_2-y_1}{x_2-x_1}-a_2-x_1-x_2$$
    	and the duplication formula for (x,y)$\in$ E
    	$$x([2]P)=\frac{x^4-b_4x^2-2b_6x-b_8}{4x^3+b_2x^2+2b_4x+b_6}$$
    \end{200}
    After establishing the group structure on an elliptic curve, we will now discuss a special kind of morphism between elliptic curves.
    \begin{defn}
    	Let $V_1$ and $V_2\subset \mathbf{P}^n$ be two projective varieties. A rational map from $V_1$ to $V_2$ is a map of the form
    	$$\phi:V_1\rightarrow V_2$$
    	$$\phi=[f_0,\dots,f_n]$$
    	where $f_0,\dots,f_n\in \bar{K}(V_1)$ have the property that for every point $P\in V_1$ at which $f_0,\dots,f_n$ are all defined, $\phi(P)\in V_2$.
    \end{defn}
    \begin{defn}
	    A rational map $\phi$ is regular(or defined) at P if there is a function $g\in \bar{K}(V_1)$ such that
	    (a) each $gf_i$ is regular at P
	    (b) for some i, $(gf_i)(P)\neq 0$
	    A rational map which is regular at every point in $V_1$ is called a morphism.
    \end{defn}
    Next we will state two very important results for morphisms on curves. We won't prove them and , and for those who want to see the proofs, you can check [Har, Chapter 2 Thm 6.8].
    \newtheorem{thm}{Theorem}[section]
    \begin{thm}
    	
    	For every $Q\in C_2$, we have the following equationLet $\phi:C_1\rightarrow C_2$ be a morphism of curves. Then $\phi$ is either constant or surjective.
    	
    \end{thm}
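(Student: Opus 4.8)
The plan is to study the image set $\phi(C_1)\subseteq C_2$ directly and show it is either a single point or all of $C_2$. The first observation is that $\phi(C_1)$ is irreducible: $C_1$ is irreducible (it is a variety), $\phi$ is continuous for the Zariski topology, and the continuous image of an irreducible space is irreducible. The second, and more substantial, observation is that $\phi(C_1)$ is \emph{closed} in $C_2$. This is where completeness enters: the curves here are smooth projective curves, so $C_1$ is a complete (proper) variety, and the image of a complete variety under a morphism into any variety is closed. Hence $\phi(C_1)$ is a closed irreducible subset of the curve $C_2$.

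Granting these two facts, I would finish by a dimension count. We have $\dim\phi(C_1)\le\dim C_1=1$, since the dimension of the image of a morphism cannot exceed that of the source. If $\dim\phi(C_1)=0$, then $\phi(C_1)$ is a zero-dimensional irreducible closed subvariety, hence a single point, so $\phi$ is constant. If instead $\dim\phi(C_1)=1$, then $\phi(C_1)$ is a one-dimensional irreducible closed subset of the one-dimensional irreducible variety $C_2$; since every proper closed subset of $C_2$ has dimension $0$, we must have $\phi(C_1)=C_2$, i.e.\ $\phi$ is surjective. This yields the claimed dichotomy.

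The main obstacle is the closedness of the image, i.e.\ the properness of projective varieties (the ``main theorem of elimination theory'', or the valuative criterion of properness); without completeness the statement fails, as the inclusion of the affine line into $\mathbf{P}^1$ shows. An alternative route that sidesteps this black box goes through function fields: if $\phi$ is non-constant, then $\phi^{*}\colon\bar{K}(C_2)\hookrightarrow\bar{K}(C_1)$ is an injection of fields of transcendence degree $1$ over $\bar{K}$, so $\bar{K}(C_1)$ is a finite extension of $\phi^{*}\bar{K}(C_2)$; one then checks $\phi$ is a finite morphism and uses that finite morphisms are surjective (equivalently, applies the valuative criterion point by point: a discrete valuation on $\bar{K}(C_1)$ centered at a point of $C_1$ restricts to one on $\bar{K}(C_2)$ whose center is a preimage, so every point of $C_2$ is hit). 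Either way the crux is the same completeness phenomenon, so following the excerpt I would simply cite it from [Har, Chapter 2 Thm 6.8] rather than reprove it.
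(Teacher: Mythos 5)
Your argument is correct and is essentially the standard proof of this fact: the paper itself gives no proof at all, deferring entirely to [Har, Chapter 2 Thm 6.8], and what you write is the argument found there (image is irreducible as the continuous image of an irreducible space, closed by completeness of a projective curve, and then a dimension count forces it to be a point or all of $C_2$). You correctly isolate the one genuinely nontrivial input --- that morphisms from complete varieties have closed image --- and your counterexample $\mathbf{A}^1 \hookrightarrow \mathbf{P}^1$ rightly shows this hypothesis cannot be dropped; citing it, as you propose, is exactly what the paper does. The alternative function-field route you sketch is also sound and is closer in spirit to how the paper later uses the result (via degrees of the induced extension $\phi^{*}\bar{K}(C_2) \subseteq \bar{K}(C_1)$), so either version would be an acceptable replacement for the missing proof.
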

    \begin{thm}
    	Let $\phi:C_1\rightarrow C_2$ be a non-constant map of smooth curves. For all but finitely many $Q\in C_2$
    	$$\#\phi^{-1}(Q)=deg_s(\phi)$$
    \end{thm}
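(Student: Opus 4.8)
The plan is to reduce to the case of a separable $\phi$ and then run a discriminant argument on a primitive element of the corresponding function-field extension. For the reduction, recall that since $\bar{K}$ is algebraically closed, hence perfect, every non-constant $\phi$ factors as $\phi=\lambda\circ\psi$, where $\psi$ is purely inseparable --- in particular a bijection on the underlying point sets --- and $\lambda$ is separable with $\deg\lambda=\deg_s\phi$; when $\mathrm{char}\,\bar{K}=0$ one simply takes $\psi=\mathrm{id}$. Since $\psi$ is bijective on points, $\#\phi^{-1}(Q)=\#\lambda^{-1}(Q)$ for every $Q$, and $\deg_s\lambda=\deg\lambda=\deg_s\phi$, so it suffices to prove the statement assuming $\phi$ itself is separable; in that case the assertion reads $\#\phi^{-1}(Q)=\deg\phi$ for all but finitely many $Q$.

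Assume then that $\phi$ is separable, so that $\phi^*$ exhibits $\bar{K}(C_1)$ as a separable extension of $\phi^*\bar{K}(C_2)$ of degree $n:=\deg\phi$. By the primitive element theorem I would pick $t\in\bar{K}(C_1)$ with $\bar{K}(C_1)=\phi^*\bar{K}(C_2)(t)$, and let
$$F(T)=T^n+\phi^*(a_{n-1})T^{n-1}+\cdots+\phi^*(a_0),\qquad a_i\in\bar{K}(C_2),$$
be its (monic) minimal polynomial. Separability means $\gcd(F,F')=1$, so the resultant $\mathrm{Res}(F,F')$ is a nonzero element of $\phi^*\bar{K}(C_2)$, corresponding to a nonzero rational function $d$ on $C_2$; and for every $Q$ at which all the $a_i$ are regular, the polynomial $F_Q(T)\in\bar{K}[T]$ obtained by specializing the coefficients of $F$ at $Q$ satisfies $\mathrm{disc}(F_Q)=\pm d(Q)$.

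The last step is to turn this into a fibre count. I would consider the map $j\colon C_1\to C_2\times\mathbf{P}^1$, $P\mapsto(\phi(P),t(P))$, which is a genuine morphism because $C_1$ is smooth, and which is birational onto a curve $X$ that over the affine chart $C_2\times\mathbf{A}^1$ is cut out by the equation $F(T)=0$. Being a birational morphism from a smooth curve, $j$ realizes $C_1$ as the normalization of $X$ and is therefore an isomorphism away from a finite set of points. Let $S\subset C_2$ be the finite set made up of the images under $\phi$ of those exceptional points, the poles of the $a_i$, and the zeros and poles of $d$. For $Q\notin S$, the morphism $j$ matches $\phi^{-1}(Q)$ bijectively with the roots of $F_Q$ in $\bar{K}$; since $\mathrm{disc}(F_Q)=\pm d(Q)\neq 0,\infty$ the polynomial $F_Q$ has $n$ distinct roots, and hence $\#\phi^{-1}(Q)=n=\deg_s\phi$.

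I expect this last step to be the principal obstacle. One has to justify with care that $C_1$ coincides with the affine model $\{F=0\}$ outside a finite set --- the normalization-and-birationality bookkeeping --- and that the resulting correspondence between $\phi^{-1}(Q)$ and the roots of $F_Q$ is a true bijection and not merely a surjection: injectivity rests on $t$ separating the points of a general fibre, and surjectivity on the properness of $\phi$ (equivalently, of $j$ onto $X$). The Frobenius factorization and the elementary properties of resultants are, by comparison, routine; the real work lies in controlling the finite exceptional set $S$ so that the count is exactly $\deg_s\phi$ rather than just a bound. An alternative, higher-powered route bypasses much of this bookkeeping: a separable morphism of smooth curves is generically \'etale, and a finite \'etale morphism of degree $n$ has all fibres of size $n$.
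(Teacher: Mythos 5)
The paper does not actually prove this theorem: it states it as a black box and refers the reader to Hartshorne, so there is no in-paper argument to compare against. Judged on its own, your proposal is the standard function-field proof (essentially Silverman's Proposition II.2.6(b)) and its skeleton is sound: the factorization $\phi=\lambda\circ\psi$ with $\psi$ purely inseparable (hence bijective on points over the algebraically closed $\bar{K}$) and $\lambda$ separable of degree $\deg_s\phi$ is exactly right, as is the passage to a primitive element $t$, its monic minimal polynomial $F$, and the non-vanishing of $\mathrm{Res}(F,F')$ --- note that monicity is what guarantees the specialization $\mathrm{disc}(F_Q)=\pm d(Q)$ behaves, since the leading coefficient cannot drop. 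The one step that is genuinely incomplete is the one you flag yourself: the bijection between $\phi^{-1}(Q)$ and the roots of $F_Q$. Injectivity for general $Q$ does follow from $j$ being birational (an isomorphism off a finite set), but surjectivity --- that every root of $F_Q$ is actually attained by a point of $C_1$ lying over $Q$ --- needs the finiteness (hence surjectivity and closedness) of the normalization map $C_1\to X$ together with the observation that, away from the poles of the $a_i$, the closure of the generic locus of $\{F=0\}$ contains the full vanishing locus over $Q$ for all but finitely many $Q$; without that, you would only get $\#\phi^{-1}(Q)\leq n$. With that bookkeeping supplied (or replaced by your cleaner alternative: a separable morphism of smooth curves is \'etale over a dense open, and a finite \'etale cover of degree $n$ has constant fibre cardinality $n$), the proof is complete and is the same argument the cited references give.
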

    Now we go back to the ellpitic curves. Because an elliptic curve contains a point O, so the map between elliptic curves should contains more imformation. Therefore we have the following definition.
    \begin{defn}
        Let $E_1$ and $E_2$ be elliptic curves. An isogeny between $E_1$ and $E_2$ is a morphism
        $$\phi:E_1\rightarrow E_2$$
        satisfying $\phi(O)=O$. $E_1$ and $E_2$ are isogenous if there is an isogeny $\phi$ between them with $\phi(E_1)=\left\{O\right\}$
    \end{defn}
    From the definition, we can clearly see that the map defined by multiplying m is an isogeny, and we use [m] to denote it.
    \begin{pro}
    	Let E/K be an elliptic curve and let $m\in \mathbf{Z}$, $m\neq 0$. Then the multiplication by m map
    	$$[m]:E\rightarrow E$$
    	is non-constant.
    \end{pro}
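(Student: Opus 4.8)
The plan is to reduce to the case of prime $m$ and then dispatch those finitely many cases with the explicit formulas recorded in the Remark above. First I would observe that $[-1]$ is the morphism $(x,y)\mapsto(x,-y-a_1x-a_3)$ of the Remark's part (a), which is visibly its own inverse and so an isomorphism; hence $[m]$ is non-constant iff $[-m]$ is, and I may assume $m\ge 1$, with $m=1$ trivial. Since $[ab]=[a]\circ[b]$ (both sides carry $P$ to $(ab)P$ and are morphisms), factoring $m$ into primes reduces the claim to showing $[\ell]$ is non-constant for every prime $\ell$: indeed a composite of non-constant morphisms of curves is non-constant, because by the first theorem above such a morphism is surjective and a composite of surjections is surjective. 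Finally, since $[\ell](O)=O$, the map $[\ell]$ is constant iff $[\ell]=[0]$, i.e. iff every point of $E(\bar K)$ is annihilated by $\ell$; and $E(\bar K)$ is infinite (for every $x_0\in\bar K$ there is a point with that $x$-coordinate, and $\bar K$ is infinite), so it suffices to exhibit, for each prime $\ell$, a point of $E(\bar K)$ not killed by $[\ell]$.

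For $\ell=2$: a point $P\neq O$ has $2P=O$ iff $P=-P$, which by the formula for $-P$ is the single equation $2y+a_1x+a_3=0$; together with the Weierstrass equation this has only finitely many solutions, so $E[2]$ is a finite, hence proper, subgroup of $E(\bar K)$. For $\ell=3$: a point $P\neq O$ with $3P=O$ satisfies $P\neq-P$ and $[2]P=-P$, so $x([2]P)=x(P)$, and the duplication formula then forces $x(P)$ to be a root of $\bigl(x^4-b_4x^2-2b_6x-b_8\bigr)-x\bigl(4x^3+b_2x^2+2b_4x+b_6\bigr)$, which is not identically zero when $E$ is nonsingular; hence $E[3]$ is finite as well. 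For a prime $\ell\ge 5$ it is enough to produce a point $T\neq O$ of order $2$ or $3$, for then $\ell$ is prime to $\mathrm{ord}(T)$ and $[\ell]T\in\{T,-T\}\setminus\{O\}$; when $\mathrm{char}(\bar K)\neq 2$ one takes $(e,\cdot)$ for a root $e$ of the cubic in the Weierstrass equation, and when $\mathrm{char}(\bar K)=2$ the same computation as in the $\ell=3$ case produces a point of order $3$.

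The genuine work, everything else being formal once the explicit formulas are granted, is the low-characteristic bookkeeping: verifying that the duplication formula and the Weierstrass equation do not degenerate in characteristic $2$ or $3$, and in particular that at least one of $E[2]$, $E[3]$ is nontrivial. The only delicate configurations are $\mathrm{char}(\bar K)=2$ with $a_1=0$, where $E[2]=\{O\}$ but the monic quartic $x^4+b_2x^3+b_4x^2+b_6x+b_8$ still has a root so $E[3]\neq\{O\}$, and $\mathrm{char}(\bar K)=3$ with $b_2=0$, where $E[3]=\{O\}$ but a $2$-torsion point exists since $\mathrm{char}(\bar K)\neq 2$; these two cases are disjoint, so one of $E[2]$, $E[3]$ is always available, which is exactly what the step for primes $\ell\ge 5$ needs. (In characteristic $0$ one can instead argue via $E(\mathbf C)\cong\mathbf C/\Lambda$, on which $[m]$ acts as $z\mapsto mz$.)
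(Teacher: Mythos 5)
Your proof is correct and shares the paper's basic strategy --- reduce via $[ab]=[a]\circ[b]$ together with the surjectivity of non-constant morphisms, then extract torsion information from the explicit formulas --- but the two arguments are organized differently. The paper reduces to odd $m$ and disposes of all odd $m$ at once by exhibiting a single point $P_0$ of exact order $2$ (a root of the denominator of the duplication formula at which the numerator vanishes to lower order), so that $[m]P_0=P_0\neq O$; you instead reduce to primes, treat $\ell=2$ and $\ell=3$ by showing $E[\ell]$ is a finite, hence proper, subgroup of the infinite group $E(\bar K)$, and reserve the ``torsion point of order coprime to $\ell$'' trick for $\ell\ge 5$. What your extra granularity buys is robustness in small characteristic: the paper's argument tacitly assumes $E$ always has a nontrivial $2$-torsion point, which fails for supersingular curves in characteristic $2$ (there $4x^3+b_2x^2+2b_4x+b_6=(a_1x+a_3)^2$ can be a nonzero constant with no roots), whereas you fall back on $E[3]$ in exactly that case. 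The one assertion you make without proof that genuinely needs checking is that $N(x)-xD(x)=-3x^4-b_2x^3-3b_4x^2-3b_6x-b_8$ is not identically zero for a nonsingular curve: this is automatic outside characteristic $3$ (the leading coefficient is $-3$), while in characteristic $3$ the polynomial collapses to $-b_2x^3-b_8$ and one must rule out $b_2=b_8=0$; this follows from the standard identity $4b_8=b_2b_6-b_4^2$, which in characteristic $3$ gives $b_8=-b_4^2$ once $b_2=0$, so $b_2=b_8=0$ forces $b_4=0$ and hence $\Delta=-27b_6^2=0$. You flag this low-characteristic bookkeeping yourself, so it is an acknowledged loose end rather than a hidden gap; with it filled in, your argument is complete and in fact covers cases that the paper's proof silently misses.
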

    \begin{proof}
    	We start by showing that $[2]\neq [0]$. From the duplication formula, if a point P=(x,y)$\in$ E has order 2, then it must satisfy
    	$$4x^3+b_2x^2+2b_4x+b_6=0$$
    	which only has finitely many solutions. Therefore $[2]\neq [0]$. Now, using the fact that $[mn]=[m][n]$, we are reduced to considering the case of odd m.
    	
    	Using the long division, one can easily find out that the polynomials 
    	$$4x^3+b_2x^2+2b_4x+b_6$$
    	does not divide
    	$$x^4-b_4x^3-2b_6x-b_8$$
    	(If it does, then $\Delta$=0, contradiction). Hence we can find an $x_0\in \bar{K}$ so that the former vanishes to a higher order at $x=x_0$ than the latter. Choosing $y_0\in \bar{K}$ so that $P_0=(x_0,y_0)\in E$, the doubling formula implies that [2]$P_0$=O.In other words, we have shown that E has a non-trivial point of order 2. But then for m odd
    	$$[m]P_0=P_0\neq O$$
    	so clearly $[m]\neq [0]$.
    \end{proof}
    \begin{thm}
    	Consider the isogeny $[m]:E\rightarrow E$. For every $Q\in E_2$
    	$$\#[m]^{-1}(Q)=deg_s[m]$$
    \end{thm}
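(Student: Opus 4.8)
The plan is to promote Theorem~1.8 --- which only gives $\#[m]^{-1}(Q) = \deg_s[m]$ for all but finitely many $Q \in E$ --- to \emph{every} point $Q \in E$, by exploiting the fact that $[m]$ is a group homomorphism. Throughout we take $m \neq 0$, so that by Proposition~1.6 the map $[m]$ is non-constant, hence surjective by Theorem~1.7; in particular $[m]^{-1}(Q)$ is nonempty for every $Q$.

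The key observation is that every fiber of $[m]$ is a coset of the kernel $E[m] := [m]^{-1}(O)$, and therefore all fibers have the same cardinality. Concretely, given $Q \in E$, use surjectivity to pick $P \in E$ with $[m]P = Q$; then for $x \in E$ one has $[m]x = Q$ if and only if $[m](x - P) = O$, i.e. $x \in P + E[m]$, where we used that $[m]$ is a homomorphism. Thus $[m]^{-1}(Q) = P + E[m]$, and translation by $P$ is a bijection of the underlying set $E$ (with inverse translation by $-P$), which restricts to a bijection $E[m] \to [m]^{-1}(Q)$. Hence $\#[m]^{-1}(Q) = \#E[m]$ for \emph{every} $Q \in E$.

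Now I would simply invoke Theorem~1.8 applied to the non-constant morphism $[m]$ of the smooth curve $E$: there is at least one point $Q_0 \in E$ with $\#[m]^{-1}(Q_0) = \deg_s[m]$. Combining this with the previous paragraph gives $\#E[m] = \deg_s[m]$, and therefore $\#[m]^{-1}(Q) = \deg_s[m]$ for all $Q \in E$, which is the claim. I do not anticipate any genuine obstacle here: the only subtlety worth flagging is that we need nothing beyond $[m]$ being a homomorphism and translations being set-theoretic bijections of $E$ --- no additional input about the geometry of the map $[m]$ is required beyond Theorems~1.7 and~1.8 and Proposition~1.6.
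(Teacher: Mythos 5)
Your proposal is correct and follows essentially the same route as the paper: both arguments observe that every fiber of $[m]$ is a coset of $[m]^{-1}(O)$, hence all fibers have equal cardinality, and then invoke the generic-fiber count to pin that common cardinality down to $\deg_s[m]$. Your version is slightly more careful in that it explicitly uses non-constancy and surjectivity to guarantee the fibers are nonempty, a point the paper's proof leaves implicit.
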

    \begin{proof}
    	From Theorem 1.2 we know that
    	$$\#[m]^{-1}(Q)=deg_s[m]$$
    	for all but finitely many $Q\in E_2$. But for any $P,P'\in E_1$, if $[m]P=[m]P'$, then $P-P'\in [m]^{-1}(O)$. Thus for every $Q\in E_2$, $[m]^{-1}(Q)$ is a coset of $[m]^{-1}(O)$. So for all Q, we have 
    	$$\#[m]^{-1}(Q)=deg_s[m]$$
    \end{proof}
    By now, we have introduced some important properties of elliptic curves, and next we will introduce the Mordell Weil theorem.
    \section{Mordell Weil Theorem}
    \newtheorem{theorem}{Theorem}[section]
    \begin{theorem}
    \textup{(Mordell-Weil)} Let E be an elliptic curve defined over a number field K. The group E(K) is a finitely generated Abelian group
    \end{theorem}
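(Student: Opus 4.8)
The plan is to deduce the theorem from two independent ingredients: the \emph{Weak Mordell--Weil theorem}, which states that the quotient $E(K)/mE(K)$ is a finite group for some (equivalently, every) integer $m\geq 2$, and the theory of \emph{heights} on $E(K)$, which supplies the analytic control needed to run an infinite descent. Neither ingredient suffices alone --- the weak theorem shows that the ``kernel of the descent'' is finite but says nothing about a bound on generators, while the height inequalities are purely formal and yield finiteness only when fed an input such as the weak theorem.

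First I would prove the Weak Mordell--Weil theorem with $m=2$. Assume to begin with that $E[2]\subset E(K)$, so that $E:y^2=(x-e_1)(x-e_2)(x-e_3)$ with $e_i\in K$; then the connecting map of Kummer theory gives an injection $E(K)/2E(K)\hookrightarrow\bigl(K^\times/(K^\times)^2\bigr)^2$ sending a point to (essentially) the classes of $x-e_1$ and $x-e_2$. The decisive observation is that the image consists of classes unramified outside the finite set $S$ consisting of the archimedean places, the primes above $2$, and the primes of bad reduction, and that this subgroup is finite because $\mathcal{O}_{K,S}^\times/(\mathcal{O}_{K,S}^\times)^2$ is finite (Dirichlet's $S$-unit theorem) and the $S$-class group is finite. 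If $E[2]\not\subset E(K)$, one proves the statement over a finite extension $L/K$ with $E[2]\subset E(L)$ and then descends, using that $E(L)/2E(L)$ finite together with $\mathrm{Gal}(L/K)$ finite forces $E(K)/2E(K)$ finite, the relevant obstruction being annihilated by $[L:K]$.

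Next I would introduce heights. Via the map $x:E\to\mathbf{P}^1$ set $h(P)=h_{\mathbf{P}^1}(x(P))$ for $P\neq O$ and $h(O)=0$, where $h_{\mathbf{P}^1}$ is the standard logarithmic height on $\mathbf{P}^1(K)$. From the explicit addition and duplication formulas recorded in the Remark above one obtains, for each fixed $Q\in E(K)$, an estimate $h(P+Q)\leq 2h(P)+C_Q$, and the crucial lower bound $h([2]P)\geq 4h(P)-C$ with $C$ independent of $P$; one also needs the finiteness property that $\{P\in E(K):h(P)\leq B\}$ is finite for every real $B$, which follows from the corresponding elementary fact on $\mathbf{P}^1$ together with the bounded degree of $x$. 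The descent then runs as follows: fix coset representatives $Q_1,\dots,Q_r$ of $E(K)/2E(K)$ and, given $P\in E(K)$, write $P=2P_1+Q_{i_1}$, $P_1=2P_2+Q_{i_2}$, and so on; combining $4h(P_{j+1})-C\leq h(2P_{j+1})=h(P_j-Q_{i_{j+1}})\leq 2h(P_j)+C'$ yields $h(P_{j+1})\leq\tfrac12 h(P_j)+C''$, so after finitely many steps $h(P_j)$ drops below a constant $C_0=C_0(E/K)$. Hence every $P\in E(K)$ lies in the subgroup generated by $Q_1,\dots,Q_r$ and the finite set $\{P\in E(K):h(P)\leq C_0\}$, and $E(K)$ is finitely generated.

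The main obstacle is the Weak Mordell--Weil theorem, or rather its dependence on two finiteness theorems of algebraic number theory --- the finiteness of the $S$-class group and the finite generation (Dirichlet) of the group of $S$-units. Once those facts and the explicit group law are in hand, the height estimates amount to careful bookkeeping; but the weak theorem is where genuine arithmetic enters, and it is precisely this step that fails over fields for which the conclusion is false.
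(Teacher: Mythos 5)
Your proposal is correct and follows essentially the same route as the paper: the Weak Mordell--Weil theorem (reduced via a finite Galois extension to the case where the relevant torsion is rational, and settled by the finiteness of the $S$-class group together with Dirichlet's $S$-unit theorem), combined with the formal descent argument driven by the height inequalities $h(P+Q)\leq 2h(P)+C_Q$, $h([2]P)\geq 4h(P)-C$, and the finiteness of bounded-height sets. The only cosmetic difference is that you fix $m=2$ and use the explicit $(x-e_i)$-descent into $\bigl(K^\times/(K^\times)^2\bigr)^2$, which is exactly the special case treated in the paper's cohomological proof of the weak theorem, whereas the paper's primary argument runs through the Kummer pairing and the finiteness of abelian exponent-$m$ extensions unramified outside $S$; the arithmetic inputs are identical.
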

    The proof is given in two parts: The first part is called the Weak Mordell-Weil Theorem, which proves that $E(K)/nE(K)$ is finite, and the second part uses height function to prove $E(K)$ is finitely generated.
    \subsection{Weak Mordell-Weil Theorem}
    In this section, we will give two proofs of the Weak Mordell-Weil Theorem.
    \newtheorem{1}{Theorem}[subsection]
    \begin{1}
    	\textup{(Weak Mordell-Weil)} Let E be an elliptic curve defined over a number field K. Then E(K)/mE(K) is finite for any n $\geq$ 2.
    \end{1}
    The first proof, given by Silverman, is based on theories  about field extension.
    \newtheorem{2}{Lemma}[subsection]
    \begin{2}
    	Let L/K be a finite Galois extension. If E(L)/mE(L) is finite, then E(K)/mE(K) is finite.
    \end{2}
    \begin{proof}
    	Let $\Phi$ be the kernel of the natural map $E(K)/nE(K) \rightarrow E(L)/nE(L)$. Therefore,$$\Phi=(E(K)\cap mE(L))/mE(K)$$
    	and for each $P$ (mod $mE(K)$) in $\Phi$, we can choose a point $Q_p \in E(L)$ with $[m]Q_p=P$. Having done this, we define a map of sets
    	$$\lambda_p:G_{L/K} \rightarrow E[m], \qquad      \lambda_p(\sigma)=Q_p^\sigma-Q_p$$Here $Q_p$ is fixed for each $P$.\\
    	We notice that
    	$$\lambda_p(\sigma)=[m](Q_p^\sigma-Q_p)=[m]Q_p^\sigma-[m]Q_p=0$$
    	So $\lambda_p(\sigma)$ is in $E[m]$.\\
    	Suppose that $\lambda_p=\lambda_{p'}$ for two points $P,P' \in E(K)\cap mE(L)$. Then we have
    	$$(Q_p-Q_{p'})^\sigma=Q_p-Q_{p'} \qquad for \ all \ \sigma \in G_{L/K}$$
    	so $Q_p-Q_{p'} \in E(K)$. Therefore
    	$$P-P'=[m](Q_p-Q_{p'})\in mE(K)\Leftrightarrow P\equiv P'\pmod {mE(K)}$$
    	So the map
    	$$\Phi\rightarrow Map(G_{L/K},\ E[m]),\qquad P\rightarrow\lambda_p$$
    	is an injection. But $G_{L/K}$ and $E[m]$ are finite sets, so $\Phi$ is a finite set.\\
    	Finally, the exact sequence
    	$$0\rightarrow\Phi\rightarrow E(K)/mE(K)\rightarrow E(L)/mE(L)$$
    	implies that $E(K)/mE(K)$ is finite(because it is between two finite sets).
    \end{proof}
    In view of the lemma above, we can enlarge the number field K and suppose that $E[m]\subset E(K)$ (because $E[m]$ is finite). We will assume this is true for the remainder of this section.\\
    The next step we will do is to translate the question into a question about a certain field extension of K.
    \newtheorem{3}{Definiton}[subsection]
    \begin{3}
    	The Kummer pairing
    	$$\kappa:E(K)\times G_{\overline{K}/K}\rightarrow E[m]$$
    	is defined as follows. Let $P\in E(K)$, and choose any $Q\in E(\overline{K})$ satisfying $[m]Q=P$. Then
    	$$\kappa(P,\ \sigma)=Q^\sigma-Q$$
    \end{3}
    Actually, from the definition we can see that it is similar to the definition of $\lambda_p$. It is well-defined because $E[m]\subset E(K)$.
    \begin{1}
    	(a) The Kummer pairing is bilinear.\\
    	(b) The kernel of the Kummer pairing on the left is $mE(K)$.\\
    	(c) The kernel of the Kummer pairing on the right is $G_{\overline{K}/L}$, where
    	$$L=K([m]^{-1}E(K))$$
    	is the compositum of all fields $K(Q)$ as $Q$ ranges over the points of $E(\overline{K})$ satisfying $[m]Q\in E(K)$.\\
    	Hence the Kummer pairing induces a perfect bilinear pairing
    	$$E(K)/mE(K)\times G_{L/K}\rightarrow E[m]$$
    \end{1}
    \begin{proof}
    	(a) The linearity of P is trivial. For $\sigma$, let $\sigma,\tau\in G_{\overline{K}/K}$. Then
    	$$\kappa(P, \ \sigma\tau)=Q^{\sigma\tau}-Q=(Q^\sigma-Q)^\tau+Q^\tau-Q=\kappa(P,\ \sigma)^\tau+\kappa(P, \ \tau)$$
    	However, $\kappa(P,\ \sigma)\in E[m]\subset E(K)$, so it is fixed by $\tau$. Therefore, 
    	$$\kappa(P, \ \sigma\tau)=\kappa(P,\ \sigma)+\kappa(P, \ \tau)$$
    	(b) Suppose $\kappa(P, \ \sigma)=0$ for all $\sigma\in G_{\overline{K}/K}$. Then we have $Q^\sigma=Q$ for all $\sigma\in G_{\overline{K}/K}$.Therefore, $Q\in E(K)$ and $P=[m]Q\in mE(K)$
    	And if $P\in mE(k)$, it is obvious that $\kappa(P, \ \sigma)=0$ for all $\sigma\in G_{\overline{K}/K}$. Therefore, the kernel on the left is $mE(K)$.\\
    	(c) Suppose $\kappa(P, \ \sigma)=0$ for all $P\in E(K)$, then $Q^\sigma-Q=0$ for all $Q$ satisfying $[m]Q\in E(K)$. But $L$ is the compositum of $K(Q)$ over all such $Q$, so $\sigma$ fixes $L$. Hence $\sigma\in G_{\overline{K}/L}$. Conversely, if $\sigma\in G_{\overline{K}/L}$, then we have
    	$$\kappa(P, \ \sigma)=Q^\sigma-Q=0$$
    	since $Q\in E(L)$ from the definition. Thus the kernel on the right is $ G_{\overline{K}/L}$.\\
    	Finally, for the last statement of the theorem, we firstly claim that $L/K$ is Galois because it is normal from the definition($[m]Q'=[m]Q\in E(K)$ if $Q'$ is a conjugate of $Q$). Since $L/K$ is Galois, we have
    	$$G_{\overline{K}/K}/G_{\overline{K}/L}=G_{L/K}$$
    	Thus it is a perfect bilinear pairing.   	
    \end{proof}
    From Theorem 2.1.2 we can see that if we can prove $L$ is a finite extension, or in other words, $G_{L/K}$ is finite, then the group $E(K)/mE(K)$ is finite. So the next step is to analyze this extension.
    \begin{1}
    	Let $L$ be the field defined in Theorem 2.1.2.\\
    	(a) $L/K$ is an abelian extension of exponent m.(I.e. $G_{L/K}$ is abelian and every element has order dividing m.)\\
    	(b) Let
    	$$S=\left\{v\in M_K^0:\ E\ has\ bad\ reduction\ at\ v\right\}\cup \left\{v\in M_K^0:v(m)\neq0\right\}\cup M_K^\infty$$
        Then $L/K$ is unramified outside S.
    \end{1}
    \begin{proof}
    	(a) This follows immediately from the last statement of Theorem 2.2, which implies that there is an injection
    	$$G_{L/K}\rightarrow Hom(E(K),\ E[m])$$
    	$$\sigma\rightarrow\kappa(\cdot,\ \sigma)$$\\
    	(b) Let $v\in M_K$ with $v\notin S$. Choose an arbitrary element $Q$ in $m^{-1}E(K)$, and the only thing we have to show is that $K'=K(Q)$ is unramified at v, because $L$ is the compositum of all such $K'$. Let $v'\in M_{K'}$ be a place of $K'$ such that $v\mid v'$, and let $k_{v'}/k_v$ be the corresponding extension of residue fields. Since $E$ has good reduction at $v$, $E$ also has good reduction at $v'$(because the discriminants are the same). Thus we have the usual reduction map
    	$$E(K')\rightarrow \tilde{E_{v'}}(k_{v'}')  $$
    	Now let $I_{v'/v}\subset G_{K'/K}$ be the inertia group for $v'/v$, and let $\sigma\in I_{v'/v}$. By definition of inertia, $\sigma$ acts trivially on $\tilde{E_{v'}}(k_{v'}')$, so
    	$$\tilde{Q^\sigma-Q}=\tilde{Q^\sigma}-\tilde{Q}=\tilde{0}$$
    	On the other hand, $Q^\sigma-Q\in E(K)[m]$, so $Q^\sigma=Q$. Thus $Q$ is fixed by all elements of $I_{v'/v}$, which implies that the action of inertia group on $K'$ is trivial. Hence $K'$ is unramified over $K$ at $v'$. 
    \end{proof}
    Next we will prove that all field extensions $L/K$ satifying the condition in Theorem 2.1.3 must be a finite field extension.
    \begin{1}
    	Let K be a number field, $S\subset M_K$ a finite set of places containing $M_K^\infty$, and $m\geq2$ an integer. Let $L/K$ be the maximal abelian extension of $K$ having exponent m which is unramified outside of S. Then $L/K$ is a finite extension.
    \end{1}
    \begin{proof}
    	First, we can assume that K contains the $m^th-roots$ of unity $\mu_m$. That is because if K doesn't contain it, we can choose $K'=K(\mu_m)$ and $LK'/K'$ is also an abelian extension of exponent m unramified at $S'$, where $S'$ is the set of places of $K'$ lying over S. And if $LK'/K'$ is finite, $L/K$ is also finite. So we can assume that K contains the $m^th-roots$ of unity $\mu_m$.
    	
    	Furthermore, we may increase the set S, because this can only make the field extension larger. Using the fact that the class number of K is finite, we can thus add a finite number of elements to S so that the ring of S-integers
    	$$R_s=\left\{a\in K:v(a)\geq 0\ for\ all\ v\in M_K, v\notin S\right\}$$
    	is a principle ideal domain. We may also enlarge S so that $v(m)=0$ for all $v\notin S$.
    	
    	Next, according to Kummer theory, we know that L is the largest subfield of $K(\sqrt[m]{a}:a\in K) $ which is unramified outside S.
    	
    	Let $v\in M_K$, $v\notin S$. Looking at the equation
    	$$X^m-a=0$$
    	over local field $K_v$, and remembering that $v(m)=0$, it is clear that $K_v(\sqrt[m]{a})/K_v$ is unramified iff 
    	$$ord_v(a) \equiv 0\pmod m$$
    	Therefore, $L=K(\sqrt[m]{a}:a\in T_S)$, where
    	$$T_S=\left\{a\in K^*/(K^*)^m:ord_v(a)\equiv 0\pmod m\right\}$$
    	Hence if we can prove that $$T_S$$ is a finite group, we can see that L is a finite extension over K.
    	To prove $T_S$ is finite, we first consider the natural map
    	$$R_S^*\rightarrow T_S$$
    	We claim that the map is surjective. To see this, suppose $a\in K^*$ represents an element of $T_S$. Then the ideal $aR_S$ is the $m^{th}$-power of an ideal in $R_S$, since the prime ideals of $R_S$ correspond to the valuations $v\notin S$. Since $R_S$ is a principle ideal domain, we can find $b\in K^*$ s.t. $aR_S=b^mR_S$, which means that 
    	$$a=ub^m$$
    	for $u\in R_S^*$. Then u and a give the same element of $T_S$, showing that the map is surjective. Now the kernel of the map certainly contains $(R_S^*)^m$, so we have a surjection
    	$$R_S^*/(R_S^*)^m\rightarrow T_S$$
    	According to Dirichlet's unit theorem, which shows that the group of units is finitely generated, we know that $R_S^*/(R_S^*)^m$ is a finite group. Thus $T_S$ is a finite group, and the proof is completed.
    \end{proof}
    From Theorem 2.1.4 we can see that $L/K$ is a finite galois extension, so $G(L/K)$ is finite. Therefore, the group $E(k)/mE(K)$ is finite, and we have the Weak Mordell Weil theorem correct.

    Next we will use cohomology to prove the Weak Mordell Weil theorem. First we will introduce group cohomology.
    \begin{3}
    	Let G be a finite group acting on an abelian group M. We define
    	$$H^0(G, M)=M^G=\left\{m\in M\mid \sigma m=m,\ all\ \sigma\in G\right\}$$
    	A crossed homomorphism is a map $f:G\rightarrow M$ such that
    	$$f(\sigma\tau)=f(\sigma)+\sigma f(\tau)\qquad all\ \sigma,\tau\in G$$
    	and a crossed homomorphism is said to be principal if given an $m\in M$
    	$$f(\sigma)=\sigma m-m,\qquad all\ \sigma\in G$$
    	Next we define
    	$$H^1(G, M)=\frac{\left\{crossed\ homomorphisms\right\}}{\left\{principle\ crossed homomorphisms\right\}}$$
    \end{3} 
    We then state the most important and basic properties of cohomology.
    \newtheorem{4}{Proposition}[subsection]
    \begin{4}
    	For any exact sequence of G-modules
    	$$0\rightarrow M\rightarrow N\rightarrow P\rightarrow 0$$
    	there is a canonical exact sequence
    	$$0\rightarrow H^0(G, M)\rightarrow H^0(G, N)\rightarrow H^0(G, P)\xrightarrow{\delta} H^1(G, M)\rightarrow H^1(G, N)\rightarrow H^1(G, P)$$
    \end{4} 
    However, we want to solve problem about field extension, which might be infinite, so we have to develop the theory about cohomology of infinite Galois group.
    \begin{3}
    	Let K be a perfect field, $\bar{K}$ its algebraic closure, and let 
    	$$G:=Gal(\bar{K}/K)=G_K$$
    	be its Galois group. Then we can dress G in the Krull topology: a subgroup is open if it fixes a finite extension of K. Thus all these subgroups form a base of $1_G$. And thus they form a base for every point $g\in G$. So we can a topology on G.\\
    	Next a G-module M is said to be discrete if the map $G\times M\rightarrow M$ is continuous relative to the discrete topology on M and the Krull topology on G. This is equivalent to requiring that every element of M is fixed by the subgroup of G fixing some finite extension of K.  
    \end{3}
 	For a dicrete $G-module\ M$, every principle crossed homomorphism $f:G\rightarrow M$ is continuous. That is because every element of $M$ is fixed by an open normal subgroup of $G$.
    \begin{3}
    	$$H^1(G, M)=\frac{\left\{continuous\ crossed\ homomorphisms\right\}}{\left\{principle\ crossed homomorphisms\right\}}$$
    \end{3} 
    And still, we have Theorem correct.\\
    \\
    Also, we have the short exact sequence
    $$0\rightarrow E(\bar{\mathbf{Q}})[m]\rightarrow E(\bar{\mathbf{Q}})\xrightarrow{m} E(\bar{\mathbf{Q}})\rightarrow 0$$
    Therefore, we can get the long exact sequence
    $$0\rightarrow E(\mathbf{Q})[m]\rightarrow E(\mathbf{Q})\xrightarrow{m} E(\mathbf{Q})\xrightarrow{\delta} H^1(\mathbf{Q}, E[m])\rightarrow H^1(\mathbf{Q}, E)\xrightarrow{m} H^1(\mathbf{Q}, E)$$
    From this, we can get another short exact sequence
    $$0\rightarrow E(\mathbf{Q})/mE(\mathbf{Q})\xrightarrow{\delta} H^1(\mathbf{Q}, E[m])\rightarrow H^1(\mathbf{Q}, E)[m]\rightarrow 0$$
    Since $\delta$ is an injection here, if we can prove the group $H^1(\mathbf{Q}, E(\mathbf{Q})[m])$ is finite, then we can prove the Weak Mordell-Weil Theorem. However, this might not be true. So we will use the local field $\mathbf{Q}_p$ to solve the problem.
    
    First, we choose the algebraic closure $\bar{\mathbf{Q}}$ for $\mathbf{Q}$, and $\bar{\mathbf{Q}_p}$ for $\mathbf{Q}_p$. The embedding $\mathbf{Q}\hookrightarrow \mathbf{Q}_p$ extends to an embedding $\bar{\mathbf{Q}}\hookrightarrow \bar{\mathbf{Q}_p}$. Moreover, the action of $Gal(\bar{\mathbf{Q}_p}/\mathbf{Q}_p)$ on $\bar{\mathbf{Q}}\subset \bar{\mathbf{Q}_p}$ defines a homomorphism $\psi:G_{\mathbf{Q}_p}\rightarrow G_Q$ by restriction of the Galois action.
    
    Therefore, a crossed homomorphism $f:G_{\mathbf{Q}}\rightarrow E(\bar{\mathbf{Q}})$ defines a crossed homomorphism $\tilde{f}:G_{\bar{\mathbf{Q}}_p}\rightarrow E(\bar{\mathbf{Q}}_p)$ by composition $\tilde{f}=f\circ \psi$. To check this is well defined, for any $\sigma,\tau\in G_{\mathbf{Q}_p}$,
    $$\tilde{f}(\sigma\tau)=f(\psi(\sigma\tau))=f(\psi(\sigma))+\psi(\sigma)f(\psi(\tau))=f(\psi(\sigma))+\sigma f(\psi(\tau))=\tilde{f}(\sigma)+\sigma \tilde{f}(\tau)$$
    And also if $f$ is a principle crossed homomorphism, then $\tilde{f}$ is also principle. Thus we can get a map $\phi: H^1(\mathbf{Q}, E)\rightarrow H^1(\mathbf{Q}_p, E)$ by taking $f$ to $\tilde{f}$.
    
    We can get the following commutative diagram
    \begin{align*}
    	0\rightarrow E(\mathbf{Q})&/mE(\mathbf{Q})\xrightarrow{\delta} H^1(\mathbf{Q}, E[m])\rightarrow H^1(\mathbf{Q}, E)[m]\rightarrow 0\\
    	&\downarrow\qquad\qquad\qquad\qquad\downarrow\qquad\qquad\qquad\qquad\downarrow\\
    	0\rightarrow E(\mathbf{Q}_p)&/mE(\mathbf{Q}_p)\xrightarrow{\delta} H^1(\mathbf{Q}_p, E[m])\rightarrow H^1(\mathbf{Q}_p, E)[m]\rightarrow 0
    \end{align*}
     where the top and bottom lines are exact and the vertical maps are embedding.
     
     Next we reach a crucial argument. If some $\gamma\in H^1(\mathbf{Q}, E[m])$ comes from the class of an element of $E(\mathbf{Q})$, then its image $\gamma_p\in H^1(\mathbf{Q}_p, E[m])$ arises from an element of $E(\mathbf{Q}_p)$. We want to quantify those $\gamma$ whose local versions $\gamma_p$ comes from $E(\mathbf{Q}_p)$ and all those $\gamma$ which vanish locally. 
     
     Here comes two definitions that we will mainly talk about.
     \begin{3}
     	The $n-Selmer group$ is defined by
     	\begin{align*}
     		S^{(n)}(E/\mathbf{Q}):&=\left\{\gamma\in H^1(\mathbf{Q}, E[n])\mid \forall p,\ \gamma_p \ comes\ from\ E(\mathbf{Q}_p)\right\}\\
     		&=ker(H^1(\mathbf{Q}, E[n])\rightarrow \prod_{p\ prime} H^1(\mathbf{Q}_p, E))
     	\end{align*}	
     \end{3} 
     \begin{3}
 	    The $Tate-Shafarevich$ group is defined by
 	    $$\Sha(E/\mathbf{Q})=ker(H^1(\mathbf{Q}, E)\rightarrow \prod_{p\ prime} H^1(\mathbf{Q}_p, E)	)$$
     \end{3} 
     And we need the following lemma, which is easy to prove.
     \begin{2}
     	For any chain of modules $A\xrightarrow{\alpha}B\xrightarrow{\beta}C$, we can get a long exact sequence
     	$$0\rightarrow ker(\alpha)\rightarrow ker(\beta\alpha)\rightarrow ker(\beta)\rightarrow coker(\alpha)\rightarrow coker(\beta\alpha)\rightarrow coker(\beta)\rightarrow 0$$
     \end{2}
     We won't prove this because all the maps are natural.
     
     If we apply the lemma to the maps
     $$H^1(\mathbf{Q}, E[n])\rightarrow H^1(\mathbf{Q}, E)[n]\rightarrow \prod_{p\ prime} H^1(\mathbf{Q}_p, E)[n])$$,
     we obtain the fundamental exact sequence
     $$0\rightarrow E(\mathbf{Q})/nE(\mathbf{Q})\rightarrow S^{(n)}E/\mathbf{Q}\rightarrow \Sha(E/\mathbf{Q})[n]\rightarrow 0$$
     We shall prove $E(\mathbf{Q})/nE(\mathbf{Q})$ to be finite by showing that $S^{(n)}E/\mathbf{Q}$ is finite.
     
     First we will prove the Selmer group is finite in a special case.
     \begin{2}
     	If all the pointos of order 2 on an elliptic curve given by the Weierstrass equation
     	$$Y^2Z+a_1XYZ+a_3YZ^2=X^3+a_2X^2Z+a_4XZ^2+a_6Z^3$$
     	have coordinates in $\mathbf{Q}$, then the Selmer group $S^{(2)}(E/\mathbf{Q})$ is finite.
     \end{2}
     \begin{proof}
     	Since they all have coordinates in $\mathbf{Q}$, we can imply that
     	$$E(\bar{\mathbf{Q}})[2]=E(\mathbf{Q})[2]\cong (\mathbf{Z}/2\mathbf{Z})\times (\mathbf{Z}/2\mathbf{Z})$$
     	(One can check [Sil, Cor 6.4(b)] for the proof)
     	
     	And the group $Gal(\bar{\mathbf{Q}}/\mathbf{Q})$ acts trivially on E[2]. Thus we have
     	$$H^1(\mathbf{Q}, E[2])\cong (\mathbf{Q}^\times/\mathbf{Q}^{\times2})^2$$
     	(One can get this by using the long exact sequence of cohomology on the short exact sequence
     	$$1\rightarrow \mathbf{Z}/2\mathbf{Z}\rightarrow \mathbf{Q}^\times \xrightarrow{2} \mathbf{Q}^\times\rightarrow 1$$)

     	Let $\gamma\in S^{(2)}(E/\mathbf{Q})\subset H^1(\mathbf{Q}, E[2])$. For each prime $p_0$ not dividing 2$\Delta$, there exists a finite unramified extension K of $\mathbf{Q}_{p_0}$ such that $\gamma$ maps to zero under the vertical arrows:
     	\begin{align*}
     		H^1(&\mathbf{Q}, E[2])\xrightarrow{\cong} (\mathbf{Q}^\times/\mathbf{Q}^{\times2})^2\\
     		&\downarrow \qquad \qquad \qquad \downarrow\\
     		H^1(&K, E[2])\xrightarrow{\cong} (K^\times/K^{\times2})^2
     	\end{align*}
     	We choose a representative element $((-1)^{\varepsilon(\infty)}\prod_{p}p^{\varepsilon(p)},(-1)^{\varepsilon'(\infty)}\prod_{p}p^{\varepsilon'(p)})\in (\mathbf{Q}^\times/\mathbf{Q}^{\times2})^2$ for $\gamma$. Here each $\varepsilon$ or $\varepsilon'$ is either 0 or -1. Therefore we can see that
     	$$ord_{p_0}((-1)^{\varepsilon(\infty)}\prod_{p}p^{\varepsilon(p)})=\varepsilon(p_0)$$
     	and so if $(-1)^{\varepsilon(\infty)}\prod_{p}p^{\varepsilon(p)}$ is a square in K, then $\varepsilon(p_0)=0$. Therefore the only p that can occur in the factorizations are those dividing 2$\Delta$, which allows only finitely many possibilities for $\gamma$.
     \end{proof}
     After proving the special case, we will now turn to prove the general case.
     \begin{1}
     	The Selmer group $S^{(n)}(E/\mathbf{Q})$ is finite.
     \end{1}
     \begin{proof}
     	Actually, instead of proving $S^{(n)}(E/\mathbf{Q})$ is finite, we want to prove that
     	$$S^{(n)}(E/L):=ker(H^1(L, E[n])\rightarrow \prod_{v\in M_K} H^1(\mathbf{Q}_p, E))$$
     	is finite for any suitably large L. And according to the next lemma, we will show that if it is correct for L, then it is correct for $\mathbf{Q}$.
     	\begin{2}
     		For any finite Galois extension L of $\mathbf{Q}$ and integer $n\geq 1$, the kernel of 
     		$$S^{(n)}(E/\mathbf{Q})\rightarrow S^{(n)}(E/L)$$
     		is finite
     	\end{2}
        \begin{proof}
        	Since $S^{(n)}(E/\mathbf{Q})$ and $S^{(n)}(E/L)$ are subgroups of $H^1(\mathbf{Q},E[n])$ and $H^1(L,E[n])$ respectively, it suffices to prove that the kernel of 
        	$$H^1(\mathbf{Q},E[n])\rightarrow H^1(L,E[n])$$
        	is finite. However, we can easily verify that the kernel of the map is $H^1(Gal(L/\mathbf{Q}), E(L)[n])$, which is finite because both Gal(L/$\mathbf{Q}$) and E(L)[n] are finite.
        \end{proof}
        Here we still need some preparations from algebraic number theory.
        \begin{2}
        	When T is a finite set of prime ideals in L, the groups $U_T$ and $C_T$ defined by the exactness of the sequence
        	$$0\rightarrow U_T\rightarrow L^\times \xrightarrow{a\rightarrow(ord_\mathfrak{p}(a))} \bigoplus_{\mathfrak{p}\notin T}\mathbf{Z}\rightarrow C_T\rightarrow0$$
        	are, respectively, finitely generated and finite.
        \end{2}
        \begin{proof}
        	First, let's consider the kernel of the map 
        	$$f:L^\times\rightarrow \bigoplus_{\mathfrak{p}}\mathbf{Z}$$
        	An element a is $kerf$ iff $ord_\mathfrak{p}(a)=0$ for all $\mathfrak{p}$, thus a is in the kernel iff it is a unit of $O_L$. And the cokernel of f is obviously finite due to the finiteness of the class number. Hence we get an exact sequence 
        	$$0\rightarrow U\rightarrow L^\times \xrightarrow{a\rightarrow(ord_\mathfrak{p}(a))} \bigoplus_{\mathfrak{p}}\mathbf{Z}\rightarrow C\rightarrow0$$
        	where U is the unit group of $O_L$, and C is the ideal class group. So U is finitely generated due to the Dedekind Unit theorem and C is a finite group.
        	
        	Next, use the kernel-cokernel exact sequence of
        	$$L^\times \rightarrow \bigoplus_{\mathfrak{p}}\mathbf{Z}\rightarrow \bigoplus_{\mathfrak{p}\notin T}\mathbf{Z}$$
        	is an exact sequence
        	$$0\rightarrow U\rightarrow U_T\rightarrow \bigoplus_{\mathfrak{p}\in T}\mathbf{Z}\rightarrow C\rightarrow C_T\rightarrow o$$
        	Thus we can see that $U_T$ and $C_T$ are finitely generated and finite recpectively.
        \end{proof}
        Now we come back to the proof of the theorem. 
        
        Let's review the proof of the special case. Actually, we can see that the proof used the following facts:\\
        (a) $\mathbf{Q}$ contains a primitive square root of 1\\
        (b) The points of order 2 all have coordinates in $\mathbf{Q}$\\
        (c) For any finite set T of prime numbers, the kernel of 
        $$r\rightarrow(ord_p(r)\pmod 2):\mathbf{Q}^\times/\mathbf{Q}^{\times2}\rightarrow \bigoplus_{p\in T}\mathbf{Z}/2\mathbf{Z}$$
        is finite.
        
        Therefore, according to the above discussion, what we have to do is to prove the following lemma and the proof will be completed.
        \begin{2}
        	Assume that L contains the $n^{th}$-unity root. For any finite subset T of $M_L$ containing $M_K^\infty$, let N be the kernel of
        	$$a\rightarrow(ord_\mathfrak{p}(a)\pmod n):L^\times/L^{\times n}\rightarrow \bigoplus_{\mathfrak{p}\in T}\mathbf{Z}/n\mathbf{Z}$$
        	Then there is an exact sequence
        	$$0\rightarrow U_T/U_T^n\rightarrow N\rightarrow C_T[n]$$
        	Therefore N is a finite group.
        \end{2}
        \begin{proof}
        	This can be proved by a diagram chase in
        	\begin{align*}
        		0\rightarrow U_T\rightarrow &L^\times \rightarrow \bigoplus_{\mathfrak{p}\notin T}\mathbf{Z}\rightarrow C_T\rightarrow 0\\
        		\downarrow n\quad &\downarrow n\qquad\downarrow n\quad\ \downarrow n\quad\\
        		0\rightarrow U_T\rightarrow &L^\times \rightarrow \bigoplus_{\mathfrak{p}\notin T}\mathbf{Z}\rightarrow C_T\rightarrow 0\\
        		&\downarrow \qquad\quad \downarrow\\
        		L^\times&/L^{\times n}\rightarrow \bigoplus_{\mathfrak{p}\in T}\mathbf{Z}/n\mathbf{Z}
        	\end{align*}
        \end{proof}
        Since we have the lemma correct, we have the theorem correct, and the proof is cmpleted.
     \end{proof}
     Actually, we can see that the proof above can prove that the Selmer group $S^{(n)}(E/K)$ is finite for any number field K. Therefore we prove the Weak Mordell Weil theorem by using cohomology.
     \subsection{The Descent Procedure and Height Function on $\mathbf{Q}$}
     In this section, we will prove the Mordell Weil theorem on $\mathbf{Q}$.
     \begin{4}
     	(Descent theorem) Let A be an abelian group. Suppose there is a 'height' funtion 
     	$$h:A\rightarrow\mathbf{R}$$
     	with the following three properties:\\
     	($\romannumeral1$) Let $Q\in A$. There is a constant $C_1$ depending on A and Q, so that for all $P\in A$,
     	$$h(P+Q)\leq 2h(P)+C_1$$
     	($\romannumeral2$) There is an integer $m\geq2$ and a constant $C_2$, depending on A, so that for all $P\in A$,
     	$$h(mP)\geq m^2h(P)-C_2$$
     	($\romannumeral3$) For every constant $C_3$,
     	$$\left\{P\in A:h(P)\leq C_3\right\}$$
     	is a finite set.\\
     	Suppose further that for the integer m in ($\romannumeral2$), the quotient group $A/mA$ is finite. Then A is finitely generated.
     \end{4}
     \begin{proof}
     	Choose elements $Q_1,\dots, Q_r\in A$ to represent the finitely many cosets in A/mA. The idea is to show that by substracting an appropriate linear combination of $Q_1,\dots, Q_r$ from P, we will be able to make the height of the resulting point less than a constant which is independent of P. Then the $Q_1,\dots, Q_r$ and the finitely many points with height less than this constant will generate A.
     	
     	Write
     	$$P=mP_1+Q_{i_1}\qquad for\ some\ 1\leq i_1\leq r$$
     	Continuing in this fashion,
     		$$P_1=mP_2+Q_{i_2}$$
     		$$.$$
     		$$.$$
     		$$.$$
     		$$P_{n-1}=mP_n+Q_{i_n}$$
     	Now for any j, we have
     	\begin{align*}
     		h(P_j)&\leq \frac{1}{m^2}[h(mP_j)+C_2]\qquad from (\romannumeral3)\\
     		&=\frac{1}{m^2}[h(P_{j-1}-Q_{i_j})+C_2]\\
     		&\leq\frac{1}{m^2}[2h(mP_{j-1})+C_1'+C_2]\qquad from (\romannumeral1)
     	\end{align*}
     	
        where we take $C_1'$ to be the maximum of the constants from ($\romannumeral1$) for $Q=-Q_i,\ 1\leq i\leq r$. Note that $C_1'$ and $C_2$ do not depend on P.
        Now use the above inequality repeatedly, starting from $P_n$ and working back to P. This yields
        \begin{align}
        	h(P_n)&\leq(\frac{2}{m^2})^nh(P)+[\frac{1}{m^2}+\frac{2}{m^4}+\frac{4}{m^6}+\dots+\frac{2^{n-1}}{m^{2n}}](C_1'+C_2)\\
        	&<(\frac{2}{m^2})^nh(P)+\frac{C_1'+C_2}{m^2-2}\\
        	&\leq 2^{-n}h(P)+(C_1'+C_2)/2
        \end{align}
        It follows that by taking n sufficiently large, we will have
        $$h(P_n)\leq 1+(C_1'+C_2)/2$$
        Since
        $$P=m^nP_n+\Sigma_{j=1}^n m^{j-1}Q_[i_j]$$
        it follows that every $P\in A$ is a linear combination of the points in the set
        $$\left\{Q_1,\dots, Q_r\right\}\cup\left\{Q\in A:h(Q)\leq 1+(C_1'+C_2)/2\right\}$$
        And from the third property, this is a finite set, which proves that A is finitely generated.
        \end{proof}
     Therefore, to solve the problem, all we have to do is to find a height function on $E(K)$ satisfying the three properties. First let's talk about how to define a height function on $E(\mathbf{Q})$.
     
     Fix a Weierstrass equation for $E/\mathbf{Q}$ of the form
     $$E:y^2=x^3+Ax+B$$
     with $A,B\in \mathbf{Z}$.
     
     \begin{3}
    	 Let $t\in \mathbf{Q}$ and write $t=p/q$ as a fraction in lowest terms. The height of t, denoted H(t), is defined by
    	 $$H(t)=max\left\{|p|,|q|\right\}$$
     \end{3}
     \begin{3}
    	 The height on $E(\mathbf{Q})$(relative to the given Weierstrass equation) is the function
    	 $$h_x:E(\mathbf{Q})\rightarrow \mathbf{R}$$
    	 $$h_x(P)=\left\{ \begin{array}{rcl}
    	 logH(x(P)) & \mbox{if}
    	 & P\neq O \\ 0 & \mbox{if} & P=O \end{array}\right.$$
    \end{3}
    We want to prove that the height function defined above has the three properties. Therefore we should prove the following lemma
    \begin{2}
    	(a) Let $P_0\in E(\mathbf{Q})$. There is a constant $C_1$, depending on $P_0$, A, B, so that for all $P\in E(\mathbf{Q})$,
    	$$h_x(P+P_0)\leq 2h_x(P)+C_1$$
    	(b) There is a constant $C_2$, depending on A, B, so that for all $P\in E(\mathbf{Q})$,
    	$$h_x([2]P)\geq 4h_x(P)-C_2$$
    	(c) For every constant $C_3$, the set
    	$$\left\{P\in E(\mathbf{Q}):h_x(P)\geq C_3\right\}$$
    	is finite.
    \end{2}
    \begin{proof}
    	Taking $C_1>max\left\{h_x(P_0),h_x([2]P_0)\right\}$, we may assume $P_0\neq O$ and $P\neq O,\pm P_0$. Then writing
    	$$P=(x,y)=(\frac{a}{d^2},\frac{b}{d^3})\qquad P_0=(x_0,y_0)=(\frac{a_0}{d_0^2},\frac{b_0}{d_0^3})$$
    	(we can write the coordinates in this form because of the form of the Weierstrass Equation) where the indicated fractions are in lowest terms. Thus we have
    	$$x(P+P_0)=(\frac{y-y_0}{x-x_0})^2-x-x_0$$.
    	Now multiplying this out and using that P and $P_0$ satisfy the Weierstrass equation yields
    	\begin{align*}
    	    x(P+P_0)&=\frac{(xx_0+A)(x+x_0)+2B-2yy_0}{(x-x_0)^2}\\
    	    &=\frac{(aa_0+Ad^2d_0^2)(ad_0^2+a_0d^2)+2Bd^4d_0^4-2bdb_0d_0}{(ad_0^2-a_0d^2)^2}
    	\end{align*}
    	In computing the height of a rational number, cancellation between numerator and denominator can only decrease the height, so we find by an easy estimation that
    	$$H(x(P+P_0))\leq C_1'max\left\{|a|^2,|d|^4,|bd|\right\}$$
    	Since $H(x(P))=max\left\{|a|,|d|^2\right\}$, and from the equation below
    	$$b^2=a^3+Aad^4+Bd^6$$
    	we can get that 
    	$$|b|\leq C_1''max\left\{|a|^{3/2}, |d|^3\right\}$$
    	which implies that 
    	$$H(x(P+P_0))\leq C_1max\left\{|a|^2,|d|^4\right\}=C_1H(x(P))$$
    	Now taking logarithms gives the desired result.
    	
    	(b) By choosing $C_2\geq 4h_x(T)$ for each of the points $T\in E(\mathbf{Q}[2])$, we may assume that $[2]P\neq O$. Then writing $P=(x,y)$, the duplication formula reads
    	$$x([2]P)=\frac{x^4-2Ax^2-8Bx+A^2}{4x^3+4Ax+4B}$$
    	It is convenient to define homogeneous polynomials
    	$$F(X,Z)=X^4-2AX^2Z^2-8BXZ^3+A^2Z^4$$
    	$$G(X,Z)=4X^3Z+4AXZ^3+4BZ^4$$
    	Then if we write $x=x(P)=a/b$ as a fraction in lowest terms, x([2]P) can be written as a quotient of integers
    	$$x([2]P)=F(a,b)/G(a,b)$$
    	Unlike what we've done in (a), we have to find a lower bound for H(x([2]P)), so it will be important to bound how much cancellation can occur between numerator and denominator.
    	The idea is to use the fact F(X,1) and G(X,1) are relative prime polynomials, so they generate the unit ideal in $\mathbf{Q}$.
    	\newtheorem{5}{Sublemma}[subsection]
    	\begin{5}
    	Let $\Delta=4A^3+27B^2$
    	\begin{align*}
    		&f_1(X,Z)=12X^2Z+16AZ^3\\
    		&g_1(X,Z)=3X^3-5AXZ^2-27BZ^3\\
    		&f_2(X,Z)=4(4A^3+27B^2)X^3-4A^2BX^2Z+4A(3A^3+22B^2)XZ^2+12B(A^3+8B^2)Z^3\\
    		&g_2(X,Z)=A^2BX^3+A(5A^3+32B^2)X^2Z+2B(13A^3+96B^2)XZ^2-3A^2(A^3+8B^2)Z^3
    	\end{align*}
    	Then the following identities hold in $\mathbf{Q}[X,Z]$:
    	$$f_1(X,Z)F(X,Z)-g_1(X,Z)G(X,Z)=4\Delta Z^7$$
    	$$f_2(X,Z)F(X,Z)-g_2(X,Z)G(X,Z)=4\Delta X^7$$
    	\end{5}
        Let
        $$\delta=gcd(F(a,b),G(a,b))$$
        be the cancellation in our fraction for x([2]P). From equations 
        $$f_1(a,b)F(a,b)-g_1(a,b)G(a,b)=4\Delta b^7$$
        $$f_2(a,b)F(a,b)-g_2(a,b)G(a,b)=4\Delta a^7$$
        we see that $\delta$ divides 4$\Delta$. Hence we obtain the bound
        $$\delta\leq |4\Delta|$$
        and so
        $$H(x([2]P))\geq max\left\{F(a,b),G(a,b)\right\}/|4\Delta|$$
        On the other hand, the same identites give the estimates
        $$|4\Delta b^7|\leq 2max\left\{f_1(a,b), g_1(a,b)\right\}max\left\{F(a,b), G(a,b)\right\}$$
        $$|4\Delta a^7|\leq 2max\left\{f_2(a,b), g_2(a,b)\right\}max\left\{F(a,b), G(a,b)\right\}$$
        Now looking at the expressions for $f_1,f_2,g_1,g_2$, we have
        $$max\left\{f_1(a,b),g_1(a,b),f_2(a,b),g_2(a,b)\right\}\geq Cmax\left\{|a|^3,|b|^3\right\}$$
        where C is a constant relying on A and B. Combining the last three inequlities yields
        $$max\left\{|4\Delta a^7|,|4\Delta b^7|\right\}\leq 2Cmax\left\{|a|^3, |b|^3\right\}max\left\{F(a,b), G(a,b)\right\}$$
        And so cancelling $max\left\{|a|^3, |b|^3\right\}$ gives
        $$max\left\{F(a,b), G(a,b)\right\}/|4\Delta|\geq (2C)^{-1}max\left\{|a|^4,|b|^4\right\}$$
        Since $max\left\{|a|^4,|b|^4\right\}=H(x(P))^4$, this gives the desired estimate
        $$H(x([2]P))\geq (2C)^{-1}H(x(P))^4$$
        and now taking logarithms gives the desired result.
        
        (c) For any constant C, the set
        $$\left\{t\in \mathbf{Q}:H(t)\leq C\right\}$$
        is obviously finite. And given any x, there will be at most two values of y satisfying the Weierstrass equation. Thus we have
        $$\left\{P\in \mathbf{Q}:h_x(P)\leq C_3\right\}$$
        is finite.
    \end{proof}
    Using the Decent theorem, the Weak Mordell-Weil theorem for $m=2$ and the lemma above, we can see that $E(\mathbf{Q})$ is finite generated.
    \subsection{Heights on Projective Space}
    We want to prove the Weak Mordell-Weil theorem for any number field K, so we have to find a height function satisfying the three properties, and then by applying the Desecent theorem we can finish the proof. However, unlike $\mathbf{Q}$, it's not easy to define a height function on other number fields. So we have to prove a lot of things to develop a height function in general cases.
    \begin{3}
    	The set of standard absolute value on $\mathbf{Q}$, which we again denote by $M_\mathbf{Q}$, consists of the following:\\
    	($\romannumeral1$) $M_\mathbf{Q}$ contains one archimedean absolute value, given by
    	$$|x|_\infty =usual\ absolute\ value$$
    	($\romannumeral2$) For each prime $p\in \mathbf{Z}$, $M+\mathbf{Q}$ contains one non-archimedean (p-adic) absolute value, given by
    	$$|p^n\frac{a}{b}|=p^{-n} \mbox{for} a,b\in \mathbf{Z},\qquad gcd(p,ab)=1$$
    	The set of standard absolute values on K, denoted $M_K$, consists of all absolute values on K whose restriction to $\mathbf{Q}$ is one of the absolute values in $M_\mathbf{Q}$.
    \end{3}
    \begin{3}
	    For $v\in M_K$, the local degree at v, denoted $n_v$, is given by
	    $$n_v=[K_v:\mathbf{Q}_v]$$
	    Here $K_v$ and $\mathbf{Q}_v$ denote the completion of the field with respect to the absolute value v.
    \end{3}
    With these definitions, we can state two basic facts from algebraic number theory which will be needed.
    \begin{4}
    	Let $L/K/\mathbf{Q}$ be a tower of number fields, and $v\in M_K$. Then
    	\begin{equation}
    		\sum_{w\in M_L\atop w|v}n_w=[L:K]n_v
    	\end{equation}
    \end{4}
    \begin{4}
    	Let $x\in K^*$. Then
    	$$\prod_{v\in M_K}|x|^{n_v}=1$$
    \end{4}
    Next we will define the height of a point in projective space.
    \begin{3}
    	Let $P\in \mathbf{P}^N(K)$ be a point with homogeneous coordinates
    	$$P=[x_0,\dots,x_N],\qquad x_i\in K$$
    	The height of P (relative to K) is defined by
    	$$H_K(P)=\prod_{v\in M_K}max\left\{|x_0|_v,\dots |x_N|_v\right\}^{n_v}$$
    \end{3}
    As we can see, when $K=\mathbf{Q}$, this definition is the same as 
    $$H(P)=max \left\{|x_0|,\dots, |x_N|\right\}$$
    where 
    $$x_0,\dots ,x_N\in \mathbf{Z}\quad and\quad gcd(x_0,\dots, x_N)=1.$$
    We will state some important proerties of the given height function.
    \begin{4}
    	Let $P\in \mathbf{P}^N(K)$\\
    	(a) The height $H_K(P)$ does not depend on the choice of homogeneous coordinates for P.\\
    	(b) $H_K(P)\geq 1$\\
    	(c) Let L/K be a finite extension. Then 
    	$$H_L(P)=H_K(P)^{[L:K]}$$
    \end{4}
    \begin{proof}
    	(a) It is directly from Proposition 2.3.2.\\
    	(b) For any point in projective space, one can find homogeneous coordinates by multiplying a number so that one of the coordinates is 1. Then every factor in the product defining $H_K(P)$ is at least 1.\\
    	(c) We compute
    	\begin{align*}
    	    H_L(P)&=\prod_{w\in M_L}max\left\{|x_0|_w,\dots |x_N|_w\right\}^{n_w}\\
    	    &=\prod_{v\in M_K}\prod_{w\in M_L\atop w|v}max\left\{|x_0|_v,\dots |x_N|_v\right\}^{n_w}\qquad since\ x_i\in K\\
    	    &=\prod_{v\in M_K}max\left\{|x_0|_v,\dots |x_N|_v\right\}^{[L:K]n_v}\\
    	    &=H_K(P)^{[L:K]}
    	\end{align*}
    \end{proof}
    Sometimes, when a field is not given, it's easier to use a height function not relative to a field.
    \begin{3}
    	Let $P\in \mathbf{P}^N(\bar{\mathbf{Q}})$. The absolute height of P, denoted H(P), is defined as follows. Choose any field K such that $P\in \mathbf{P}^N(K)$. Then
    	$$H(P)=H_K(P)^{1/[K:\mathbf{Q}]}$$
    	In view of Proposition 2.3.3, it's easy to see that this is well defined.
    \end{3}
    We now investigate how the height changes under mappings between projective spaces.
    \begin{3}
    	A morphism of degree d between projective spaces is a map
    	$$F:\mathbf{P}^N\rightarrow \mathbf{P}^M$$
    	$$F(P)=[f_0(P),\dots, f^M(P)]$$
    	where $f_0,\dots , f_M\in \bar{\mathbf{Q}}[X_0,\dots, X_N]$ are homogeneous polynomials of degree d with no commone zero in $\bar{\mathbf{Q}}$ other than $X_0=\dots =X_N=0$.
    \end{3}
    To prove the height function has the three properties, we have to find the lower bound and upper bound of the height function. Therefore we have the following theorem:
    \begin{1}
    	Let
    	$$F:\mathbf{P}^N\rightarrow \mathbf{P}^M$$
    	be a morphism of degree d. Then there are constants $C_1$ and $C_2$, depending on F, so that for all points $P\in \mathbf{P}^N(\bar{\mathbf{Q}})$,
    	$$C_1H(P)^d\leq H(F(P))\leq C_2H(P)^d$$
    \end{1}
    \begin{proof}
    	Write $F=[f_0,\dots,f_M]$ with homogeneous polynomials $f_i$, and let $P=[x_0,\dots, x_N]\in \mathbf{P}^N(\bar{\mathbf{Q}})$. Choose some number field K containing $x_0,\dots,x_N$ and all of the coefficients of all of the $f_i's$. Then for each $v\in M_K$, let
    	$$|P|_v=max_{0\leq i\leq N}\left\{|x_i|_v\right\},\qquad |F(P)|_v=max_{0\leq j\leq M}\left\{|f_j(P)|_v\right\}$$
    	and
    	$$|F|_v=max\left\{|a|_v:a\ is\ a\ coefficient\ of\ some\ f_i\right\}$$
    	Then from the definition of height,
    	$$H_K(P)=\prod_{v\in M_K}|P|_v^{n_v}\qquad and\qquad H_K(F(P))=\prod_{v\in M_K}|F(P)|_v^{n_v}$$
    	so it makes sense to define
    	$$H_K(F)=\prod_{v\in M_K}|F|_v^{n_v}$$
    	Finally, we let $C_1,/dots,$ denote constants which depend only on M,N and d, and set
    	$$\varepsilon(v) = \left\{ \begin{array}{rcl}
    	1 & \mbox{if}
    	& v\in M_K^\infty \\ 0 & \mbox{if} & v\in M_K^0
    	\end{array}\right.$$
    	Having set notation, we turn to the proof of the theorem. The upper bound is relatively easy. Let $v\in M_K$. The triangle inequality yields
    	$$|f_i(P)|_v\leq C_1^{\varepsilon(v)}|F|_v|P|_v^d$$
    	Now raise to the $n_v-power$, multiply over all $v_in M_K$, and take the $[K:\mathbf{Q}]^{th}-root$. This yields the desired upper bound
    	\begin{align*}
    		H(F(P))&\leq C_1^{
    			\Sigma_{v\in M_K}\varepsilon(v)n_v
    			/[K:\mathbf{Q}]}H(F)H(P)^d\\
    	    &= C_1^{
    	    	\Sigma_{v\in M_K^\infty}n_v
    	    	/[K:\mathbf{Q}]}H(F)H(P)^d\\
    		&=C_1H(F)H(P)^d
    	\end{align*}
    	Notice that we don't use the fact that the $f_i's$ have no common non-trivial zero. But for the lower bound, we have to use this condition.
    	
    	From the Nullstellensatz theorem that the ideal generated by $f_0,\dots, f_M$ in $\bar{mathbf{Q}}[X_0,\dots, X_N]$ contains some power of each $X_0,\dots, X_N$, since each $(0,\dots, 0)$. Thus for an approriate integer $e\geq 1$, there are polynomials $g_{ij}\in \bar{mathbf{Q}}[X_0,\dots, X_N]$ such that
    	\begin{equation}
    		X_i^e=\sum_{j=0}^{M} g_{ij}f_j \qquad\mbox{for each}   \ \ 0\leq i\leq N
    	\end{equation}
    	Replacing K by a finite extension, we may assume that each $g_{ij}\in K[X_0,\dots, X_N]$. Further, by discarding all terms except those which are homogeneous of degree e, we may assume that each $g_{ij}$ is homogeneous of degree e-d. Let us set the further reasonable notation
    	$$|G|_v=max\left\{|b|_v:b\ is\ a\ coefficient\ of\ some\ g_{ij}\right\}$$
    	$$H_K(G)=\prod_{v\in M_K}|G|_v^{n_v}$$
    	Recalling that $P=[x_0,\dots, x_N]$, the equation described above imply that for each i,
    	\begin{align*}
    		|x_i|_v^e&=|\sum_{j=0}^{M}g_{ij}(P)f_j(P)|_v\\
    		&\leq C_2^{\varepsilon(v)}max_{0\leq j\leq M}\left\{|g_{ij}(P)f_j(P)|_v\right\}
    	\end{align*}
    	Now taking the maximum over i gives
    	$$|P|_v^e\leq C_2^{\varepsilon(v)}max_{0\leq j\leq M\atop 0\leq i\leq N}\left\{|g_{ij}(P)|_v\right\}|F(P)|_v$$
    	But since each $g_{ij}$ has degree e-d, the usual application of the triangle inequality yields
    	$$|g_{ij}(P)|_v\leq C_3^{\varepsilon(v)} |G|_v|P|_v^{e-d}$$
    	Substituting this in above and multiplying through by $|P|_v^{d-e}$ gives
    	$$|P|_v^d\leq C_4^{\varepsilon(v)}|G|_v|F(P)|_v$$
    	and now the usual raising to the $n_v$-power, multiplying over $v\in M_K$ and taking the $[K:\mathbf{Q}]^{th}$-root yields the desired lower bound.
    \end{proof}
    \begin{3}
    	For $x\in \bar{\mathbf{Q}}$, let
    	$$H(x)=H([x,1])$$
    	Similarly, if $x\in K$, then
    	$$H_K(x)=H_K([x,1])$$
    \end{3}
    \begin{1}
    	Let
    	$$f(T)=a_0T^d+a_1T^{d-1}+\dots+a_d=a_0(T-\alpha_1)\dots (T-\alpha_d)\in \bar{\mathbf{Q}}[T]$$
    	be a polynomial of degree d. Then
    	$$2^{-d}\prod_{j=1}^{d}H(\alpha_j)\leq H([a_0,\dots,a_d])\leq 2^{d-1}\prod_{j=1}^{d}H(\alpha_j)$$
    \end{1}
    \begin{proof}
    	First note that the inequality remains unchanged if $f(T)$ is replaced by $(1/a_0)f(T)$. Thus we can prove the inequality under the condition of $a_0=1$.
    	
    	Let $\mathbf{Q}(\alpha_1,\dots,\alpha_d)$, and for $v\in M_K$, set
    	$$\varepsilon(v) = \left\{ \begin{array}{rcl}
    	2 & \mbox{if}
    	& v\in M_K^\infty \\ 1 & \mbox{if} & v\in M_K^0
    	\end{array}\right.$$
    	(Note that this function is different from the $\varepsilon(v)$ we defined before, that is because we can see that
    	$$|x+y|_v\leq \varepsilon(v)max\left\{|x|_v,|y|_v\right\}$$
    	which will be helpful to solve the inequality, due to its form.)
    	
    	We will now prove that
    	$$\varepsilon(v)^{-d}\prod_{j=1}^{d}max\left\{|\alpha_j|_v,1\right\}\leq max_{0\leq i\leq d}\left\{|a_i|_v\right\}\leq \varepsilon(v)^{d-1}\prod_{j=1}^{d}max_{0\leq i\leq d}\left\{|a_i|_v,1\right\}$$
    	Once this is done, raising to the $n_v$-power, multiplying over $v\in M_K$, and taking $[K:\mathbf{Q}]^{th}$-roots gives the desired result.
    	
    	The proof is by induction of $d=deg(f)$. For $d=1$, the inequality is clear. Assume for polynomials of degree d-1, the result is true. Choose an index k so that
    	$$|\alpha_k|_v\geq |\alpha_j|_v \qquad \mbox{for all} \ 0\leq j\leq d$$ 
    	And we define a polynomial
    	\begin{align*}
    		g(T)&=(T-\alpha_1)\dots(T-\alpha_{k-1})(T-\alpha_{k+1})\dots(T-\alpha_d)\\
    		&=b_0T^{d-1}+\dots+b_{d-1}
    	\end{align*}
    	Therefore we can see that $f(T)=(T-\alpha_k)g(T)$. By comparing coefficients we can get
    	$$a_i=b_i-\alpha_kb_{i-1}$$
    	We now prove the upper above bound.
    	\begin{align*}
    		max_{0\leq i\leq d}\left\{|a_i|_v\right\}&=max_{0\leq i\leq d}\left\{|b_i-\alpha_kb_{i-1}|\right\}\\
    		&\leq \varepsilon(v)max_{0\leq i\leq d}\left\{|b_i|,|\alpha_kb_{i-1}|\right\}\\
    		&\leq \varepsilon(v)max_{0\leq i\leq d}\left\{|b_i|\right\}max\left\{|\alpha_k|,1\right\}\\
    		&\leq\varepsilon(v)^{d-1}\prod_{j=1}^{d}max_{0\leq i\leq d}\left\{|a_i|_v,1\right\}
    	\end{align*}
    	(The last step is by the induction hypothesis applied to g).
    	
    	Next, to prove the lower bound, we consider two cases. First, if $|\alpha_k|_v\leq \varepsilon(v)$, then by the choice of the index k,
    	$$\prod_{j=1}^{d}max\left\{|\alpha_j|_v,1\right\}\leq max\left\{|\alpha_k|_v\right\}\leq \varepsilon(v)^d$$
    	And remember that $a_0=1$, so we have
    	$$max_{0\leq i\leq d}\left\{|a_i|_v\right\}\geq 1$$
    	Therefore
    	$$\varepsilon(v)^{-d}\prod_{j=1}^{d}max\left\{|\alpha_j|_v,1\right\}\leq max_{0\leq i\leq d}\left\{|a_i|_v\right\}$$
    	Second, if $|\alpha_k|_v\geq \varepsilon(v)$, then
    	\begin{align*}
    		max_{0\leq i\leq d}\left\{|a_i|_v\right\}&=max_{0\leq i\leq d}\left\{|b_i-\alpha_kb_{i-1}|_v\right\}\\
    		&=max_{0\leq i\leq d-1}\left\{|b_i|_v\right\}|\alpha_k|_v
    	\end{align*}
    	for $v\in M_K^0$. And for $v\in M_K^\infty$,
    	\begin{align*}
    	max_{0\leq i\leq d}\left\{|b_i-\alpha_kb_{i-1}\right\}&\geq (|\alpha_k|_v-1)max_{0\leq i\leq d-1}\left\{|b_i|_v\right\}\\
    	&>\varepsilon(v)^{-1}|\alpha_k|_vmax_{0\leq i\leq d-1}\left\{|b_i|_v\right\}
    	\end{align*}
    	Combining the two situations we can get that
    	$$max_{0\leq i\leq d}\left\{|a_i|_v\right\}\geq \varepsilon(v)^{-1}|\alpha_k|_vmax_{0\leq i\leq d-1}\left\{|b_i|_v\right\}$$
    	And now applying the induction hypothesis to g gives the desired lower bound, which completes the proof.
    \end{proof}
    The reason we prove this theorem is that we want to prove that this height function 'satisfies' the third property. But before we do that, we have to prove another lemma.
    \begin{2}
    	Let $P\in \mathbf{P}^N(\bar{\mathbf{Q}})$ and $\sigma\in G_{\bar{\mathbf{Q}}/\mathbf{Q}}$. Then
    	$$H(P^\sigma)=H(P)$$
    \end{2}
    \begin{proof}
    	Let $K/\mathbf{Q}$ be a field with $P\in \mathbf{P}^N(K)$ and $\sigma$ gives an isomorphism $\sigma:K\rightarrow K^\sigma$. It likewise identifies the sets of absolute values,
    	$$\sigma: M_K\rightarrow M_{K^\sigma}$$
    	$$v\rightarrow v^\sigma$$
    	Clearly $\sigma$ also gives an isomorphism $K_v\rightarrow K_{v^\sigma}^\sigma$, so $n_v=n_{v^\sigma}$. We now compute
    	\begin{align*}
    		H_{K^\sigma}(P^\sigma)&=\prod_{w\in M_{K^\sigma}}max\left\{|x_i^\sigma|_w\right\}^{n_w}\\
    		&=\prod_{v\in M_K}max\left\{|x_i^\sigma|_{v^\sigma}\right\}^{n_{v^\sigma}}\\
    		&=\prod_{v\in M_K}max\left\{|x_i|_v\right\}^{n_v}\\
    		&=H_K(P).
    	\end{align*}
    	Since $[K:\mathbf{Q}]=[K^\sigma:\mathbf{Q}]$, this is the desired result.
    \end{proof}
    \begin{1}
    	Let C and d be constants. Then the set 
    	$$\left\{P\in\mathbf{P}^N(\bar{\mathbf{Q}}):H(P)\leq C\ and\ [\mathbf{Q}(P):\mathbf{Q}]\leq d\right\}$$
    	contains only finitely many points. In particular, for any number field K,
    	$$\left\{P\in \mathbf{P}^N(K):H_K(P)\leq C\right\}$$
    	is a finite set.
    \end{1}
    \begin{proof}
    	Let $P\in \mathbf{P}^N(\bar{\mathbf{Q}})$. Take homogeneous coordinates for P, say
    	$$P=[x_0,\dots,x_N]$$
    	with some $x_j=1$. Then $\mathbf{Q}(P)=\mathbf{Q}(x_0,\dots,x_N)$, and we have the easy estimate
    	\begin{align*}
    		H_{\mathbf{Q}(P)}(P)&=\prod_{v\in M_{\mathbf{Q}(P)}}max_{0\leq i\leq N}\left\{|x_i|_v\right\}^{n_v}\\
    		&\geq max_{0\leq i\leq N}(\prod_{v\in M_{\mathbf{Q}(P)}}max\left\{|x_i|_v,1\right\}^{n_v})\\
    		&=max_{0\leq i\leq N}H_{\mathbf{Q}(P)}(x_i)
    	\end{align*}
    	Thus if $H(P)\leq C$ and $[\mathbf{Q}(P):\mathbf{Q}]\leq d$, then
    	$$max_{0\leq i\leq N}\ H(x_i)\leq C\qquad max_{0\leq i\leq N}\ [\mathbf{Q}(x_i):\mathbf{Q}]\leq d$$
    	It thus suffices to prove that the set
    	$$\left\{x\in \bar{\mathbf{Q}}:H(x)\leq C\ and\ [\mathbf{Q}(x_i):\mathbf{Q}]\leq d\right\}$$
    	is finite, which means that we only have to prove the case $N=1$.
    	
    	Suppose $x\in\bar{\mathbf{Q}}$ is in this set, and let $e=[\mathbf{Q}(x):\mathbf{Q}]\leq d$. Further let $x=x_1,\dots, x_e$ be the conjugates of x, so the minimal polynomial of x over $\mathbf{Q}$ is
    	$$f_x(T)=(T-x_1)\dots(T-x_e)=T^e+\dots+a_e\in \mathbf{Q}(T)$$
    	Now
    	\begin{align*}
    		H([1,a_1,\dots,a_e])&\leq 2^{e-1}\prod_{j=1}^{e}H(x_j)\\
    		&=2^{e-1}H(x)^e\\
    		&\leq (2C)^d
    	\end{align*}
    	Thus we can see that there are only finitely many choices for $a_i$, so the set is finite, which completes the proof.
    \end{proof}
    \subsection{Heights on Elliptic Curves}
    Now we have already developed enough theorems about height functions on projective space, so we will focus on elliptic curves, and finish the proof of the Weak-Mordell theorem.
    \begin{3}
    	Let f,g be two real-valued functions on a set $\Phi$. Then we write 
    	$$f=g+O(1)$$
    	if there's constants $C_1$ and $C_2$ so that
    	$$C_1\leq f(P)-g(P)\leq C_2\ \mbox{for all}\ P\in \Phi$$ 
    	In only the lower(respectively upper) inequality is satisfied, then we naturally write $f\geq g+O(1)$(respecively $f\leq g+O(1)$).
    \end{3}
    \begin{3}
    	The height on projective space is the function
    	$$h:\mathbf{P}^N(\bar{\mathbf{Q}})\rightarrow \mathbf{R}$$
    	$$h(P)=logH(P)$$
    	Note that $h(P)\geq 0$ for all P since $H(P)\geq1$.
    	
    	And let E/K be an elliptic curve and $f\in \bar{K}(E)$ a function. The height on E(relative to f) is the function
    	$$h_f:E(\bar{K})\rightarrow \mathbf{R}$$
    	$$h_f(P)=h(f(P))$$
    \end{3}
    \begin{4}
    	Let E/K be an elliptic curve and $f\in K(E)$ a non-constant function. The for any constant C,
    	$$\left\{P\in E(K):h_f(P)\leq C\right\}$$
    	is a finite set.
    \end{4}
    \begin{proof}
    	The function f gives a finite-to-one map of the set in question to the set
    	$$\left\{Q\in \mathbf{P}^1(K):H(Q)\leq e^C\right\}$$
    	Now apply Theorem to this last set and we can get the desired result.
    \end{proof}
    The next step helps us find the relation between the additive law on an elliptic curve and the height function.
    \begin{1}
    	Let E/K be an elliptic curve and let $f\in K(E)$ be an even function(i.e. $f\circ[-1]=f$). Then for all $P,Q\in E(\bar{K})$,
    	$$h_f(P+Q)+h_f(P-Q)=2h_f(P)+2h_f(Q)+O(1)$$
    \end{1} 
    \begin{proof}
    	Choose a Weierstrass equation for E/K of the form
    	$$E:y^2=x^3+Ax+B$$
    	We start by proving the theorem for the particular function $f=x$(Note that it is an even function). The general case will be an easy corollary.
    	
    	Since $h_x(O)=0$ and $h_x(-P)=h_x(P)$, the result clearly holds if $P=O$ or $Q=O$. We now assume that $P,Q\neq O$, and write
    	$$x(P)=[x_1,1]\qquad x(Q)=[x_2,1]$$
    	$$x(P+Q)=[x_3,1]\qquad x(P-Q)=[x_4,1]$$
    	Thus we have
    	$$x_3+x_4=\frac{2(x_1+x_2)(A+x_1x_2)+4B}{(x_1+x_2)^2-4x_1x_2}$$
    	$$x_3x_4=\frac{(x_1x_2-A)^2-4B(x_1+x_2)}{(x_1+x_2)^2-4x_1x_2}$$
    	Define a map $g:\mathbf{P}^2\rightarrow\mathbf{P}^2$ by
    	$$g([t,u,v])=[u^2-4tv, 2u(At+v),(v-At)^2-4Btu]$$
    	Then the formula for $x_3$ and $x_4$ shows that there is a commutative diagram
    	\begin{align*}
    		\qquad E\ &\times \ E\xrightarrow{G}E\ \times\ E\\
    		&\downarrow\qquad \qquad \quad \downarrow\\
    		\sigma\qquad \mathbf{P}^1\ &\times\ \mathbf{P}^1\quad \mathbf{P}^1\ \times\ \mathbf{P}^1\quad \sigma\\
    		&\downarrow\qquad \qquad \quad \downarrow\\
    		&\mathbf{P}^2\ \quad \xrightarrow{g}\ \quad \mathbf{P}^2
    	\end{align*}
    	where
    	$$G(P,Q)=(P+Q, P-Q)$$
    	and the vertical map $\sigma$ is the composition of the two maps
    	$$E\times E\rightarrow \mathbf{P}^1\times\mathbf{P}^1\qquad and\qquad \mathbf{P}^1\times\mathbf{P}^1\rightarrow \mathbf{P}^2$$
    	$$(P,Q)\rightarrow(x(P),x(Q))\qquad ([\alpha_1,\beta_1],[\alpha_2,\beta_2])\rightarrow [\beta_1\beta_2, \alpha_1\beta_2+\alpha_2\beta_1,\alpha_1\alpha_2]$$
    \end{proof}
    The next step is to show that g is morphism, so as to be able to apply Theorem. By definition, this is equivalent to prove that the three polynomials have no common non-trivial zeros. Suppose that $g([t,u,v])=[0,0,0]$. If $t=0$, then from
    $$u^2-4tv=0\qquad and\qquad (v-At)^2-4Btu=0$$
    we see that $u=v=0$. Thus we may assume that $t\neq 0$, and so it makes sense to define a new quantity $x=u/2t$. Notice that the equation $u^2-4tv=0$ can be written as $x^2=v/t$. Now dividing the equalities
    $$2u(At+v)+4Bt^2=0\qquad and\qquad (v-At)^2-4Btu=0$$
    by $t^2$ and rewriting them interms of x yields the two equations
    $$\psi(x)=4x^3+4Ax+4B=0$$
    $$\phi(x)=x^4-2Ax^2-8Bx+A^2=0$$
    And we can see that
    $$(12X^2+16A)\phi(X)-(3X^3-5AX-27B)\psi(X)=4(4A^3+27B^2)\neq 0$$
    This completes the proof that g is a morphism.
    
    We return to our commutative diagram, and compute
    \begin{align*}
    	h(\sigma(P+Q,P-Q))&=h(\sigma\circ G(P,Q))\\
    	&=h(g\circ \sigma(P,Q))\\
    	&=2h(\sigma(P,Q))+O(1)\qquad from\ Theorem 
    \end{align*}
    since g is a morphism of degree 2. Now to complete the proof for $f=x$, we will show that for all $R_1,R_2\in E(\bar{K})$, there is a relation
    $$h(\sigma(R_1,R_2))=h_x(R_1)+h_x(R_2)+O(1)$$
    Then using this twice on both sides of the equation
    $$h(\sigma(P+Q,P-Q))=2h(\sigma(P,Q))+O(1)$$
    will give the desired result.
    
    One verifies that if either $R_1=O$ or $R_2=O$, then $h(\sigma(R_1,R_2))=h_x(R_1)+h_x(R_2)$. Otherwise, we may write
    $$x(R_1)=[\alpha_1,1]\qquad and\qquad x(R_2)=[\alpha_2,1]$$
    and so
    $$h(\sigma(R_1,R_2))=h([1,\alpha_1+\alpha_2,\alpha_1\alpha_2])\quad and\quad h_x(R_1)+h_x(R_2)=h(\alpha_1)+h(\alpha_2)$$
    Then from Theorem applied to the polynomial $(T+\alpha_1)(T+\alpha_2)$, we obtain the desired estimate
    $$h(\alpha_1)+h(\alpha_2)-log4\leq h([1,\alpha_1+\alpha_2,\alpha_1\alpha_2])\leq h(\alpha_1)+h(\alpha_2)+log2$$
    Finally, to deal with the general case, we prove that
    $$h_f=\frac{deg(f)h_x}{2}+O(1)$$
    Once this is proved, the theorem follows immediately from multiplying the known relation for $h_x$ by $\frac{deg(f)}{2}$
    \begin{2}
    	Let $f,g\in K(E)$ be even functions. Then
    	$$(deg\ g)h_f=(deg\ f)h_g+O(1)$$
    \end{2}
    \begin{proof}
    	Let $x,y\in K(E)$ be Weierstrass coordinates for E/K. The subfield consisting of all even functions is exactly K(x), so we can find a rational function $\rho(X)\in K(X)$ so that
    	$$f=\rho\circ x$$
    	Hence using theorem and the fact that $\rho$ is a morphism, we can get that
    	$$h_f=(deg\ \rho)h_x+O(1)$$
    	But from the equation above, we have
    	$$degf=degxdeg\rho=2deg\rho$$
    	So we find 
    	$$2h_f=(deg\ f)h_x+O(1)$$
    	The same reasoning for g yields
    	$$2h_g=(deg\ g)h_x+O(1)$$
    	and combining the last two equations gives the desired result.
    	
    \end{proof}
    \newtheorem{6}{Corollary}
    \begin{6}
    	Let E/K be an elliptic curve and $f\in E(K)$ an even function. \\
    	(a) Let $Q\in E(\bar{K})$. Then for all $P\in E(\bar{K})$,
    	$$h_f(P+Q)\leq 2h_f(P)+O(1)$$
    	(b) Let $m\in \mathbf{Z}$. Then for all $P\in E(\bar{K})$,
    	$$h_f([m]P)=m^2h_f(P)+O(1)$$
    \end{6}
    \begin{proof}
    	(a) This is directly from Theorem 2.4.2, because $h_f(P-Q)\geq 0$.\\
    	(b) Since f is even, it suffices to consider $m\geq 0$. Further, this result is trivial for $m=0,1$. We will finish the proof by induction. Assume it is known for m-1 and m. Replacing P,Q in Theorem 2.4.3 by [m]P and P, we find
    	\begin{align*}
    		h_f([m+1]P)&=-h_f([m-1]P)+2h_f([m]P)+2h_f(P)+O(1)\\
    		&=(-(m-1)^2+2m^2+2)h_f(P)+O(1)\\
    		&=(m+1)^2h_f(P)+O(1)
    	\end{align*}
    \end{proof}
    \begin{1}
    	\textup{(Mordell-Weil)} Let E be an elliptic curve defined over a number field K. The group E(K) is a finitely generated Abelian group
    \end{1}
    \begin{proof}
	    Choose any even, non-constant function $f\in K(E)$, for example the x-coordinate function on Weierstrass equation. Then from Corollary 2.4.4(a), 2.4.4(b) and Proposition 2.4.1 we know that the height function $h_f$ satisfies the three properties, and using the Descent theorem and Weak Mordell-Weil theorem we know Mordell-Weil theorem is correct.
    \end{proof}
    
\end{document}